\numberwithin{equation}{section}
\newcommand{\h}{\hspace{1cm}}
\newcommand{\prob}{\mathbb{P}}
\newcommand{\var}{\mathbb{V}{\rm ar}}
\newcommand{\exptn}{\mathbb{E}}
\newcommand{\calf}{\mathcal{F}}
\newcommand{\dtv}{{\rm d}}
\newcommand{\ccr}{C_c^+(\bar{\mbbr}_0)}
\newcommand{\mbbr}{\mathbb{R}}
\newcommand{\mbbn}{\mathbb{N}}
\newcommand{\mbbo}{\mathbbm{1}}
\newcommand{\sfT}{\mathsf{T}}
\newcommand{\sfu}{\mathsf{u}}
\newcommand{\sfv}{\mathsf{v}}
\newcommand{\bdelta}{\boldsymbol{\delta}}
\newcommand{\mbbM}{\mathbb{M}}
\newcommand{\mbbt}{\mathbb{T}}
\newcommand{\scrt}{\mathscr{T}}
\newcommand{\scrm}{\mathscr{M}}
\newcommand{\scrl}{\mathscr{L}}
\newcommand{\dist}{{\rm dist}}
\newcommand{\wt}{\widetilde}
\newcommand{\ol}{\overline}
\newcommand{\wtbfN}{\wt{\bf N}}
\newcommand{\hlsfun}{F_{g_1,g_2, \epsilon_1, \epsilon_2}}
\DeclareMathOperator{\pow}{Pow}
\newcommand{\beq}{\begin{equation}}
\newcommand{\eeq}{\end{equation}}
\newcommand{\alns}[1]{\begin{align*}#1\end{align*}}
\newcommand{\aln}[1]{\begin{align} #1 \end{align}}
\newcommand{\been}{\begin{enumerate}}
\newcommand{\een}{\end{enumerate}}
\newcommand{\norm}[1]{\| #1 \|}
\newcommand{\inv}{{-1}}
\newcommand{\probconv}{\stackrel{\prob}{\longrightarrow}}
\newcommand{\hlconv}{\xrightarrow{\mbbM_0}}
\newcommand{\asconv}{\xrightarrow{a.s.}}
\newcommand{\eqd}{\stackrel{d}{=}}
\newcommand{\nn}{\nonumber}
\newcommand{\rtext}[1]{\textcolor{black}{#1}}
\newtheorem{thm}{Theorem}[section]
\newtheorem{propn}[thm]{Proposition}
\newtheorem{lemma}[thm]{Lemma}
\theoremstyle{remark}
\newtheorem{remark}[thm]{Remark}
\newtheorem{cor}[thm]{Corollary}
\theoremstyle{definition}
\newtheorem{defn}[thm]{Definition}
\newtheorem{fact}[thm]{Fact}
\newtheorem{ass}[thm]{Assumptions}
\title{{ Large deviations of extremes in branching random walk  with regularly varying displacements}}
\author{{Ayan} {Bhattacharya}}
\date{March 10, 2022}
\begin{document}

\maketitle

\begin{abstract}
In this article, we consider a branching random walk on the real-line where displacements coming from the same parent have jointly regularly varying tails. The genealogical structure is assumed to be a supercritical Galton-Watson tree, satisfying Kesten-Stigum condition. We study the large deviations of the extremal process, formed by the appropriately normalized positions in the $n$-th generation and show that the large extreme-positions form clusters in the limit. As a consequence of this, we also study the large deviations of the maximum among positions at the $n$-th generation.
\end{abstract}

\section{Introduction} \label{sec:introduction}

Branching Random Walk (BRW) on the real-line is a generalization of the branching process where each particle is assigned a position on the real-line. A formal description is as follows: BRW starts with a single particle at the origin of the real-line. This particle is called the initial ancestor and its position is defined to be zero. After a unit time, the initial ancestor reproduces an independent copy of the point process $\scrl$ on the real-line and dies. The atoms of the reproduced copy of the point process are called displacements. The newborn particles form the first generation. Position of each of these particles is defined to be its displacement translated by the position of its parent. After a unit time, each of the particles in the first generation dies after producing an independent copy of the point process $\scrl$. The newborn particles form the second generation. We assign a position to each of them by translating their displacement by the position of their parent. This mechanism goes on that the resultant system is called BRW.

 We briefly describe now the set-up of BRW considered in this article. Let $\bdelta_x$ denote the Dirac measure which puts unit mass at $x$. \label{nota_dirac_delta} It will be assumed that the point process $\scrl$ has the form
\begin{align}
{\mathscr L} = \sum_{i=1}^Z \bdelta_{X_i}. \label{defn:progeny:pp:BRW}
\end{align}
It is clear that the genealogical structure underlying BRW is given by Galton-Watson (GW) tree with progeny distribution $Z$. Precise assumptions on genealogical structure are as follows.

\begin{ass}[Assumptions on the genealogical structure]  \label{ass_genealogy}
\begin{enumerate}
\item The  progeny random variable  $Z$ is allowed to be  $\mbbn_0 =\{0,1, 2, \ldots\}$-valued and independent of displacement random vector ${\bf X} = (X_i : i \ge 1)$.

\item The genealogical GW tree is assumed to be supercritical and satisfies Kesten-Stigum condition, that is,
\aln{
\mu := \exptn(Z) \in (1, \infty) ~~~~~~\mbox{ and } ~~~~~ \exptn(Z \log_+ Z) < \infty \label{eq:assumption:branching:process}
}
where $\log_+ x = \log(x \vee 1)$.
\end{enumerate}
\end{ass}

\begin{ass}\label{ass:marginal:identical:dist}
The displacements are assumed to be identically distributed and have regularly varying tail at $\infty$ of index $\alpha$.  To be precise,
\aln{
& \mbox{$X_i$'s are  identically distributed random variables such that } \nonumber \\
& \prob(|X_1|>x) = x^{-\alpha} {\rm L}(x) \mbox{ and } \lim_{x \to \infty} \frac{\prob(X_1 > x)}{\prob(|X_1| > x)} = q \in [0,1] \label{eq:marginal:regvar}
}
where ${\rm L}$ is a {\it slowly varying function} ($\lim_{t \to \infty} {\rm L}(tx)/ {\rm L}(t) = 1$ for every $x > 0$).
\end{ass}

  Later, the displacements ${\bf X}$ are assumed to be jointly regularly varying (see Assumption~\ref{ass:joint:regvar:disp}). If $Z \le b$ almost surely in \eqref{defn:progeny:pp:BRW}, then we can use multivariate regular variation to model the joint regular variation of the displacements coming from the same parent which is very restrictive. Thanks to \cite{lindskog:resnick:roy:2014} for their contributions to the regular variation on $\mbbr^\mbbn$ which will allow us to define joint regular variation for the random sequence ${\bf X}$. An important advantage of this set-up is that it covers asymptotic tail-independence, multivariate regular variation, asymptotically full tail-dependence (see subsection~\ref{subsec_linearly_dep_disp}) structure of the displacements.

\subsubsection{Scope of the article} \label{subsec_scope}
In this article, our aim is to study exact asymptotics (rate of decay) for the probabilities of occurrence of  {\it exceptionally large value of the extreme positions}. We introduce first some notations which will help to state the aim in a more formalized fashion. 
  The genealogical GW tree will be denoted by $\mathsf{T} =  ({\cal E}, {\sf V})$ \label{nota_tree_vertices_edges}  where ${\cal E}$ and ${\sf V}$ denote the set of all edges and vertices respectively. We use Ulam-Harris labeling for the vertices $\sfv \in {\sf V}$.   The position and the generation of the vertex $\sfv \in {\sf V}$ are denoted by $S(\sfv)$  \label{nota_position} and $|\sfv|$ \label{nota_generation} respectively. $X(\sfu)$ \label{nota_displacement} will be used to denote the displacement attached to the vertex $\sfu$. $Z_n$ \label{nota_Zn} denotes the size of the $n$-th generation of the tree ${\sf T}$ for every $n \ge 0$.  The $k$-th largest position at the $n$-th generation is denoted by $M_n^{(k)}$ \label{nota_order_stat_k} for $k \ge 1$. We are mainly interested in the large values of $M_n^{(1)} = \max_{|\sfv| = n}S(\sfv)$ and  $(M_n^{(i)} : 1 \le i \le k)$ through the point process.  We shall use $f_n = o(g_n)$ if $\lim_{n \to \infty} f_n/ g_n = 0$ and $f_n \sim g_n$ if $\lim_{n \to \infty} f_n/g_n = 1$  for two positive increasing sequences $(f_n : n \ge 1)$ and $(g_n : n \ge 1)$. Suppose that the \rtext{assumptions} \eqref{eq:assumption:branching:process} and \eqref{eq:marginal:regvar} hold. It is known in the literature (see  \cite{bhattacharya:hazra:roy:2017} and the references therein) that the \rtext{order of} fluctuation of the largest $k$ positions $(M_n^{(i)} : 1 \le i \le k)$ is given  by a sequence $(b_n : n \ge 1)$ such that 
\aln{
b_n = \inf \{ x : \prob ( |X_1| > x ) \le \mu^{- n} \} \sim \mu^{n/\alpha} \wt{\rm L}(\mu^n). \label{eq:defn:b_n}
}  
Here $\wt{\rm L}$ is a slowly varying function related to ${\rm L}$ in \eqref{eq:marginal:regvar}. By definition, $(b_n : n \ge 1)$ satisfies $\mu^{n} \prob(|X_1| > b_n) \to 1 $. Consider a sequence $(\gamma_n : n \ge 1)$ which grows faster than the sequence $(b_n : n \ge 1)$ such that 
\aln{
b_n = o(\gamma_n) \mbox{ that is, } \mu^n \prob(|X_1| > \gamma_n) = o(1). \label{eq:defn:gamman}
}
The main aim of this article is to study the exact asymptotics of the probability 
\aln{
\Big\{ \sum_{|\sfv| = n} \bdelta_{\gamma_n^\inv |S(\sfv)|} (y, \infty) \ge k \Big\}  \mbox{ for every } y > 0 \mbox{ and } k \ge 1 \label{eq_event_ldp_position}
}
and identify its limit conditioned on the survival of the genealogical tree. This task is accomplished using the tools developed in \cite{hult:samorodnitsky:2010} to study the large deviations of point processes. \rtext{ To be precise, we show that the probability of the events in \eqref{eq_event_ldp_position} can be normalized in such a way that the normalized probability converges to a (random) limit measure. The limit measure lives on the space of all point measures on $(0, \infty)$ equipped with the Borel sigma-algebra generated by the vague metric. We further show that the limit measure concentrates on the point measures with random clusters and the law of the random clusters can be described through the law of the genealogical GW tree.} The appearance of the clusters can be explained by the fact that {\it a large displacement can cause multiple large positions} which is very specific and natural for this model. In this article, we first uncover this picture rigorously. This investigation is used later for the exact asymptotics of the probability of  $ \{ M_n^{(1)} > \gamma_n y \}$. Recently, \cite{gantert:2018} and \cite{dyszewski:gantert:hofelsauer:2020} have studied the large deviations of the maximum position when displacements have exponentially decaying tail.



 The joint regular variation of the tails of the displacements (from the same parent) plays an important role in our analysis. We have added a small discussion in subsection~\ref{subsec_linearly_dep_disp} emphasizing its importance. We mentioned only linearly and fully dependent displacements for the interest of the space. The extremes for the dependent stationary sequence of random variables with regularly varying tail have been studied thoroughly over the last few decades (see \cite{resnick:roy:2014}, \cite{basarak:segers:2009} and the large deviations of extreme positions have also been addressed in the works \cite{fasen:roy:2016} and \cite{hult:samorodnitsky:2010}). These studies may help us further to study the large deviations of order statistics, gap statistics, and other complicated but important functions of extreme positions. Our main result (Theorem~\ref{thm:extreme:positions:linear:dependent:disp} in Section~\ref{sec:mainresults}) is the key to the aforementioned investigations.


\subsubsection{ Outline of the paper} 

In Subsections~\ref{subsec_ldp_max_iid} and \ref{subsec_ldp_pp_iid}, we assume the displacements to be i.i.d. and discuss the large deviations of the maximum and point processes associated with $(\gamma_n^\inv S(\sfv) : |\sfv| = n)$ respectively. We briefly review the literature in Subsection~\ref{subsec_review_literature} and mention some of the unsolved problems. Section~\ref{sec:mainresults} contains the main results of this article along with their consequences after a brief introduction to the $\mathbb{M}_0$-topology. A discussion on the proofs with some open questions are provided in Subsection~\ref{sesubsec:discussion:open:probs}. The rest of the paper is dedicated to the proofs of the main results and their consequences. A list of the notations is provided in Section~\ref{sec:notation}.

\subsection{Large deviations for the maximum position: i.i.d. displacements and immortal genealogical tree}
\label{subsec_ldp_max_iid}

Here, we assume that the genealogical tree does not have any leaf in addition to the assumptions \eqref{eq:assumption:branching:process}  for the sake of simplicity. It will be further assumed that the displacements are i.i.d. and have regularly varying tail (see assumption \eqref{eq:marginal:regvar}). Our first task is to guess the sequence $(r_n : n \ge 1)$ such that 
\aln{
\liminf_{n \to \infty} r_n \prob(M_n^{(1)} > \gamma_n ) > 0. \label{eq_maxima_iid_first_task}
}
The second task is to establish that $\lim_{n \to \infty} r_n \prob(M_n^{(1)} > \gamma_n)$ exists  and then characterize the limit of $r_n \prob( M_n^{(1)} > \gamma_n y)$ for every $y > 0$.  In the next paragraph, we present heuristics to guess the sequence $(r_n : n \ge 1)$ based on the `principle of single large displacement'.



\noindent{\bf  Principle of single large displacement.} We assume $\exptn(Z^2) < \infty$ to present the heuristics. According to the {\it principle of single large displacement}, $M_n^{(1)}$ exceeds the value $\gamma_n$ if and only if exactly one of the displacements exceeds the threshold $\gamma_n$.  Mathematically,  this means
 \aln{
\prob(M_n^{(1)} > \gamma_n) \sim \prob \Big( \bigcup_{|\sfu| \le n} \{ X(\sfu) > \gamma_n \}  \Big).  \label{eq:principle:single:big:jump}
}
We further note that the right hand side of \eqref{eq:principle:single:big:jump} is bounded above by
\aln{
\exptn \Big( \sum_{|\sfu| \le n} \prob \Big[ X(\sfu) > \gamma_n ~ \Big| \cup_{i =1}^n{\sf D}_i \Big] \Big)   \le  \frac{\mu^{n + 1}}{(\mu -1)} \prob(X_1 > \gamma_n)  \label{eq_huristic_upper_bound}
}
using union bound as the displacements are independent of the branching mechanism where ${\sf D}_n = \{\sfu : |\sfu| = n\}$ for every $n \ge 0$ \label{nota_Dn}.  
 We also know that 
\aln{
&  \prob \Big( \bigcup_{|\sfu| \le n} \{ X(\sfu) > \gamma_n \}  \Big) \nonumber \\
&\ge \exptn \Big[ \sum_{|\sfu| \le n} \prob(X(\sfu) > \gamma_n ~ | \cup_{i = 1}^n {\sf D}_i )   - \sum_{|\sfu| \le n} ~~ \sum_{\sfu': \sfu' \neq \sfu; ~ |\sfu'| \le n} \prob \Big( X(\sfu) > \gamma_n;~~ X(\sfu') > \gamma_n ~ \Big| ~\cup_{i = 1}^n {\sf D}_i \Big) \Big] \nonumber \\
& \ge  \frac{\mu^{n + 1} - \mu}{\mu -1} \prob( X_1 > \gamma_n)  - \frac{1}{2} \big [\prob( X_1 > \gamma_n) \big]^2 \exptn \Big[ ( \sum_{i =1}^n Z_i)^2 \Big] \label{eq:heuristic:lower:bound:first:ineq}
}
as the displacements are i.i.d. and independent of the branching mechanism. It is easy to see that $\exptn(\sum_{i = 1}^n Z_i)^2 = \sum_{i =1}^n \exptn(Z_i^2) + 2\sum_{i = 1}^n \sum_{j = i + 1}^n \exptn(Z_i Z_j) $. Using the martingale property of $(\mu^{-n} Z_n : n \ge 0)$ adapted to the filtration $(\calf_n = \sigma(Z_0, Z_1, \ldots, Z_n) : n \ge 0)$, for every $j \ge i+1$, we have 
\alns{
\exptn(Z_i Z_j) = \exptn[ \mu^{ j} Z_i \exptn( \mu^{- j} Z_j | \calf_i) ] = \mu^{j - i} \exptn(Z_i^2).
}
So an upper bound of $\exptn(Z_i^2)$ yields an upper bound to the second term in \eqref{eq:heuristic:lower:bound:first:ineq}. It can be easily derived (see Example~4.4.9 in \cite{durrett:2019} for example) that 
\alns{
\exptn(Z_i^2) = \mu^{2i} ( 1 + \var(Z) \sum_{k = 2}^{i + 1} \mu^{- k}) \le \mu^{2i}\Big( 1 +\frac{\mu \var(Z)}{\mu - 1} \Big).
}  
These observations together yield 
\aln{
\exptn \Big[ (\sum_{i = 1}^n Z_i)^2 \Big] \le \Big( 1 + \frac{\mu \var(Z)}{\mu - 1} \Big)[ (\mu^2 - 1)^\inv + 2 (\mu - 1)^2] \mu^{2n + 2} =: c_2 \mu^{2n}. \label{eq:upper_bound_second_moment_totalsizen}
}
Combining \eqref{eq:heuristic:lower:bound}, \eqref{eq:upper_bound_second_moment_totalsizen} and \eqref{eq:defn:gamman},  we have following lower bound for the probability on the right hand side of \eqref{eq:principle:single:big:jump}
\aln{
 \frac{\mu^{n + 1} - \mu}{\mu -1} \prob( X_1 > \gamma_n)  - {\rm c}_2 \big[ \mu^n \prob( X_1 > \gamma_n) \big]^2  \sim \frac{\mu^{n + 1}}{\mu -1} \prob( X_1 > \gamma_n) . \label{eq:heuristic:lower:bound}
}
So combining \eqref{eq:principle:single:big:jump} and \eqref{eq:heuristic:lower:bound} with \eqref{eq_huristic_upper_bound},  we can see that $ \prob(M_n^{(1)} > \gamma_n) \sim \frac{\mu}{\mu - 1}  \Big( \mu^n \prob(X_1 > \gamma_n) \Big)$.
The heuristics then suggest
\aln{
r_n = \Big( \mu^n \prob(|X_1| > \gamma_n) \Big)^{-1} \label{eq:defn:rn}
} 
when the left-tail of the displacement can decay utmost at the order of the right-tail (tail balancing condition \eqref{eq:marginal:regvar}). The following result shows that this heuristic holds even when $\exptn(Z_1^2) = \infty$, but the Kesten-Stigum condition holds.


\begin{thm} \label{thm:ldp:maxima:iid}
Suppose that  the displacements $(X_i : i \ge 1)$ are i.i.d. random variables and satisfy \eqref{eq:marginal:regvar}.  Let $Z$ satisfy  $\prob(Z \ge 1) = 1$ in addition to \eqref{eq:assumption:branching:process}. Then, for every $x > 0$,  we have 
\begin{align}
\lim_{n \to \infty} r_n \prob \Big( M_n^{(1)} > \gamma_n x \Big) =  q x^{-\alpha}\mu/(\mu -1).
\end{align}
\end{thm}

\subsection{ Large deviations for point processes: i.i.d. displacements and immortal genealogical tree}
\label{subsec_ldp_pp_iid}

The most interesting and important feature of the model BRW is the dependence structure among the positions and the fact that one displacement can affect multiple positions. The large deviations of the maximum position does not offer any knowledge about the joint behavior of the large value of the extreme positions. Point process is the most popular tool in the extreme value theory to capture the dependence structure among the extreme positions.  For displacements with exponentially decaying tail, it has been predicted in \cite{brunet:derrida:2011}  that the weak limit of the extremal processes have clusters due to strong dependence structure. The prediction has been formally proved in \cite{madaule:2011} (see \cite{subag:zeitouni:2014} and the references therein).  The clustering phenomenon for the weak limit has been formalized in \cite{bhattacharya:hazra:roy:2016} (displacements are assumed to be i.i.d.) and \cite{bhattacharya:hazra:roy:2017} (displacements coming from the same parent have jointly regularly varying tail).  To the best of our knowledge, there is no article in the literature till now addressing the large deviations of the point process. In this article, we shall show that the large value of the extreme positions also form the clusters. We now develop some notations to discuss the large deviations of the point processes in detail.

By large deviations of the point process, we mean the asymptotic study of the following point process
\aln{
{\bf N}_n := \sum_{|\sfv| = n} \bdelta_{\gamma_n^\inv S(\sfv)} \label{eq_scope_of_article}
}
as $n \to \infty$. Let $\scrm(\mbbr_0)$ denote the space of all point measures on $\mbbr_0 := \mbbr \setminus \{0\}$. \label{nota_zero_removed_real_line} It is immediate that the limit of ${\bf N}_n$ (if exists) would put infinite mass at the point $0$ and so, we have ${\bf N}_n \probconv \varnothing$ where $\varnothing$ is the null measure in $\scrm(\mbbr_0)$ equipped with vague topology. If we consider a measurable subset ${\sf A} \subset \scrm_0 := \scrm(\mbbr_0) \setminus \{\varnothing\}$ which does not contain $\varnothing$ as its limit point (such sets will be called `nice' sets in this subsection), then $\prob({\bf N}_n \in {\sf A}) \to 0$ as $n \to \infty$. According to \cite{hult:samorodnitsky:2010}, this is exactly the framework where the large deviations of the point processes come into the  play. Motivated by this, we borrow their tools for the asymptotic analysis of ${\bf N}_n$.

Here we have two tasks to accomplish. The first task will to be guess the rate of the decay of the probabilities $\prob({\bf N}_n \in {\sf A})$ as $n  \to \infty$. Let ${\rm int }({\sf A})$ and ${\rm cl}({\sf A})$ denote interior and closure of the set ${\sf A}$ respectively.  Let ${\sf E}_x := \{ \xi \in \scrm(\mbbr_0) : \xi(x, \infty) \ge 1 \}$ for every $x > 0$. \label{nota_E_x} Then it can be checked that $\varnothing \notin {\rm cl}({\sf E}_x)$ (see the proof of Theorem~\ref{thm:ldp:rightmost:joint:regvar}) and therefore, qualifies to be a nice set. Note that $\{{\bf N}_n \in {\sf E}_1\} = \{M_n^{(1)} > \gamma_n\}$ and so, we can conclude from Theorem~\ref{thm:ldp:maxima:iid} that $r_n \prob({\bf N}_n  \in {\sf E}_x)$ converges to positive non-trivial limit as $n \to \infty$ for every $x > 0$.  But it still does not ensure that $r_n \prob({\bf N}_n \in {\sf A})$ converges to a positive limit for every nice subset ${\sf A}$ which is very much related to the second task, identification of the limit. To answer these questions, we view the sequence $(r_n \prob({\bf N}_n \in \cdot) :  n \ge 1)$ as a sequence of measures on the space $\scrm_0$ and derive its limit in the appropriate topology. We use $\mbbM_0$ topology (which will be discussed briefly in Subsection~\ref{subsec_mzero_topology}). We introduce now a few more notations. Define 
\begin{align}
\nu_\alpha(\dtv x) := \alpha q x^{-\alpha-1} \mbbo(x > 0) \dtv x + \alpha (1- q) |x|^{-\alpha -1} \mbbo(x < 0) \dtv x . \label{eq:defn:nu:alpha}
\end{align} 
and $\partial {\sf A} = {\rm cl}({\sf A}) \setminus {\rm int}({\sf A})$ denotes the boundary of the set ${\sf A}$. \label{nota_boundary_set}  \rtext{ Recall that $Z_l$ denotes the size of the $l$-th generation of the genealogical tree for every $l \ge 0$.}

\begin{thm} \label{thm:extremal:process:ldp:iid:disp}
Let the assumptions stated in Theorem~\ref{thm:ldp:maxima:iid} hold. Then there exists a non-null measure $m^*_{iid}$ on the space $\scrm(\mbbr_0)$ such that $\lim_{n \to \infty} r_n \prob({\bf N}_n \in {\sf A})   = m^*_{iid}({\sf A})$  for every ${\sf A} \in {\cal B}(\scrm_0)$ satisfying  $m^*_{iid}(\partial {\sf A}) = 0 \mbox{ and } \varnothing \notin {\rm cl}({\sf A}).$
Furthermore, 
\begin{align}
m^*_{iid}({\sf A}) = \sum_{l=0}^\infty \mu^{-l} \exptn \Big( \nu_\alpha \{ x \in \mbbr : Z_l \bdelta_x \in {\sf A}\} \Big). \label{eq:ldp:point:processes:iid:disp}
\end{align}
\end{thm}

\begin{remark}
 The first observation is that the exceptionally large value(s) form cluster as they are influenced by one large displacement. The heuristic explanation behind the limit obtained in \eqref{eq:ldp:point:processes:iid:disp} is as follows: The large displacement occurs at the $(n- l)$-th generation with probability $\mu^{-l}$ (ratio of the expected size of $(n-l)$-th generation compared to the size of $n$-th generation as each displacement has equal probability to be large) and scales up its $Z_l$ descendants. If we allow the genealogical tree ${\sf T}$ to have leaves and condition on the survival tree, then the branching random variables in $m^*_{iid}$ are replaced by their size-biased version as given in Remark~\ref{thm:ldp:pp:iid:disp:leaf}.    
\end{remark}

\begin{remark}
An important feature of the limit $m^*_{iid}$ is that the large values of extreme positions collapse in the limit. Therefore, multiple particles can stay at the same large position with positive probability. Note that this localization phenomenon is too specific to the i.i.d. displacements and changes completely even in the presence of a little bit of dependence (see Theorem~\ref{thm:main_result}). Later, we consider $(X_{i} : i \ge 1)$  a moving average process and see how this measure changes (see Theorem~\ref{thm:extreme:positions:linear:dependent:disp}). This observation motivates us to investigate further the large deviations when the displacements from the same parent are jointly regularly varying. 
\end{remark}

\subsection{Literature review} \label{subsec_review_literature}

 The investigations on the asymptotic behaviour of extreme positions were initiated in the fundamental works of Hammerseley, Kingman and Biggins. For the last two decades, the challenges, difficulties in the asymptotic study of the extreme positions have attracted many researchers. A huge literature has emerged due to its connections to the other models like first passage percolation, last passage percolation, random energy model, Gaussian free field, Gaussian multiplicative chaos.  But most of these works consider displacements with Gaussian-like tails. For a nice overview on the extreme positions for light-tailed displacements, we refer to \cite{shi:2016} and the references therein. \cite{aidekon:2013} and \cite{bramson:ding:zeitouni:2016}  are two of the fundamental works in the last decade.  The literature on the heavy-tailed displacements is very small, undeveloped compared to that and, still blooming. There are plenty of interesting and challenging open questions. Some of them are mentioned here and some of them in the Subsection~\ref{sesubsec:discussion:open:probs}. For the interest of the space, we briefly mention some of the fundamental works from the literature on extremes of BRW.

\begin{itemize}

\item 
If the displacements are assumed to have exponentially-decaying tail, the large deviations of the maximum position has recently been studied in \cite{gantert:2018} assuming the displacements to be i.i.d. The lower and the moderate deviations of maximum position has been studied in \cite{chen:he:2020}.  The large deviations for the joint behavior of the extreme positions are yet to be investigated. The large deviations for the empirical process has been studied in \cite{louidor:perkins:2015}, \cite{louidor:tsairi:2017}, \cite{chen:he:2019}, \cite{zhang:2020b} and see \cite{zhang:2020a} for the lower deviations for the level sets.

\item  In \cite{gantert:2000}, different speeds have been studied under the assumption that the displacements have semiexponential tail and are i.i.d. The fluctuations of the maximum position is recently been studied  in \cite{dyszewski:gantert:hofelsauer:2020} assuming the displacements to be i.i.d. with stretched-exponential tail.  In the same framework, the large deviations of the maximum position have been studied in \cite{dyszewski:gantert:2020}.  

\item The continuum analogue of BRW (when displacement have exponentially decaying tail) is branching Brownian motion (BBM). It is also known that the distribution of the maximum position of BBM at time $t$ satisfies famous Fisher-Kolmogorov-Petrovskii-Piskunov (FKPP) equation. The fluctuations of the extreme positions in BBM has been thoroughly studied in  \cite{aidekon:berestycki:brunet:shi:2013}, \cite{arguin:bovier:kistler:2011}, \cite{arguin:bovier:kistler:2012}, \cite{arguin:bovier:kistler:2013}. 
Large deviations of the maximum position have been studied in \cite{chauvin:rouault:1988} and the level sets have been recently obtained in \cite{aidekon:hu:shi:2017}. In \cite{derrida:shi:2016}, the large deviations for maximum position in different variants of BBM have been studied.  But the large deviations for the joint behavior of the extreme positions are yet to be studied.

\item The fluctuations of the maximum position has been studied in \cite{durrett:1979} and \cite{durrett:1983}, and weak limit of the extremal processes has been obtained in  \cite{bhattacharya:hazra:roy:2016}  assuming the displacements to be i.i.d. with regularly varying tail. \cite{bhattacharya:maulik:palmowski:roy:2016} studied the weak limit of the extremal processes for multi-type BRW where the large deviations are yet to be studied. \cite{maillard:2015} studied the tail behavior of the global maximum position when the genealogical GW tree is critical.    Large deviations for the global maximum among the positions are yet to be studied in any framework.


\end{itemize}


\section{Main results and their consequences} \label{sec:mainresults}

We start this section with a brief discussion on the $\mathbb{M}_0$ topology for the  measures on a punctured Polish space. The notations and the terminologies introduced in this discussion will help us to state  the main result rigorously. After the main result Theorem~\ref{thm:main_result}, its consequences will be discussed.  A discussion on the proof strategy and some of the open questions are given in Subsection~\ref{sesubsec:discussion:open:probs}.

\subsection{$\mathbb{M}_0$ topology for measures on Polish space} \label{subsec_mzero_topology}

Let $\mathbb{S}$ be a Polish space equipped with the metric $\rho$. The open  ball of radius $r$ centered at ${\bf x}$ will be denoted by ${\sf B}_r({\bf x}) = \{{\bf y} \in {\mathbb S} : \rho ({\bf x}, {\bf y}) < r \}$ and ${\cal B}(\mathbb{S})$ denotes the $\sigma$-algebra generated by these open balls. Fix  ${\bf s}_0 \in {\mathbb S}$ and let us consider  the subspace $\mathbb{S}_0 = \mathbb{S} \setminus \{{\bf s}_0\}$ with the $\sigma$-algebra ${\cal B}(\mathbb{S}_0)$ generated by open sets in the induced subspace topology.  ${\cal C}_+(\mathbb{S}_0)$ denotes the space of all non-negative, bounded and continuous functions on $\mathbb{S}$ which vanish in a neighbourhood ${\sf B}_r({\bf s}_0)$ for some $r > 0$. $\mathbb{M}({\mathbb S}_0)$ denotes the space of all measures $\xi$ on $\mathbb{S}_0$  such that  $\int f \dtv \xi < \infty$ for every $f \in {\cal C}_+(\mathbb{S}_0)$. A basic neighbourhood of $\xi \in \mathbb{M}(\mathbb{S}_0)$ looks like $\{\varsigma: \max_{1 \le i \le k} |\int f_i \dtv \xi - \int f_i \dtv \varsigma| < \epsilon \}$ for every finite collection $(f_i : 1 \le i \le k) \subset {\cal C}_+(\mathbb{S}_0)$. So a sub-basis can be constructed using the the sets of the form 
\alns{
\{ \xi \in \mathbb{M}(\mathbb{S}_0) : \xi(f) \in {\sf G}\}, ~~f \in {\cal C}_+ (\mathbb{S}_0) ~~ \mbox{ where } {\sf G} ~ \mbox{ is an open subset of } [0, \infty)
}
and the topology induced by this sub-basis is called $\mathbb{M}_0$ topology. \label{not_Mzero_topology} The convergence induced by this topology will be called $\mathbb{M}_0$ convergence and denoted by $\xi_n \hlconv \xi$ when $(\xi_n : n \ge 1) \in \mbbM (\mathbb{S}_0)$ converges in $\mbbM_0$ topology to the measure $\xi \in \mbbM ( \mathbb{S}_0)$. Thanks to the Portmantau theorem (Theorem~2.1 in \cite{lindskog:resnick:roy:2014}) for $\mathbb{M}_0$-convergence as it provides other alternative  characterizations of the convergence. 

In this paper, we shall be mostly concerned with $\mathbb{S} = \mbbr^d, \mbbr^\mbbn$ and $\scrm_0$ and some of the important terminologies along with fundamental facts will be recalled in the next two subsections.

\subsubsection{$\mathbb{M}_0$ convergence  in the space of point measures and the convergence determining class of functions}

Consider now $\mathbb{S} = \scrm(\mbbr_0)$ and ${\bf s}_0 = \varnothing$. It is a well-known fact that $\scrm(\mbbr_0)$ is a Polish space when equipped with the vague topology (see Proposition~3.17 in \cite{resnick:1987}). Let $\rho_v$ denote the metric induced by the vague topology and we call it {\it vague metric}. Following the previous discussion on $\mathbb{M}_0$-convergence, \label{nota_measure_on_Mzero} we can define $\mathbb{M}_0$ topology on the space $\mathbb{M}(\scrm_0)$. The functions in the convergence determining class are known in literature and  summarized in the following fact.

\begin{fact}[Lemma~A.1 and Theorem~A.2 in \cite{hult:samorodnitsky:2010}] \label{fact:HLSconv}
 Let $C_l^+(\mathbb{R}_0)$ denotes the space of Lipschitz continuous, real-valued and non-negative functions on $\mbbr_0$ with compact support. Consider two functions $g_1, g_2 \in C_l^+(\mbbr_0)$, $\epsilon_1, \epsilon_2 > 0$ and define $\hlsfun : \scrm_0 \to [0, \infty)$ as 
\aln{
\hlsfun(\nu) = \prod_{i =1}^2 \Big[ 1 - \exp \Big\{ - \Big( \int g_i \dtv \nu - \epsilon_i \Big)_+ \Big\} \Big].  \label{eq_defn_hlsfun}
} 
\begin{enumerate}

\item Let ${\tt m}_1$ and ${\tt m_2}$ be two elements in $\mbbM(\scrm_0)$. Then ${\tt m}_1 = {\tt m}_2$ if and only if $\int \hlsfun(\nu) {\tt m}_1(\dtv \nu) = \int \hlsfun(\nu) {\tt m}_2(\dtv \nu)$ for all $g_1, g_2 \in C_l^+(\mbbr_0)$ and $\epsilon_1, \epsilon_2 > 0$.

\item Let $({\tt m}_n : n \ge 1) \in \mbbM(\scrm_0)$ and ${\tt m} \in \mbbM(\scrm_0)$. Then ${\tt m}_n \hlconv  {\tt m}$ if and only if for every $g_1, g_2 \in C_l^+(\mbbr_0)$ and $\epsilon_1, \epsilon_2 > 0$, 
\alns{
\lim_{n \to \infty} \int_{\scrm_0} \hlsfun( \nu) {\tt m}_n(\dtv \nu) = \int_{\scrm_0} F_{g_1, g_2, \epsilon_1, \epsilon_2} (\nu) {\tt m}(\nu).
}  
 
\end{enumerate}
\end{fact}

\subsubsection{Regular variation on the space of all real sequences via $\mbbM_0$ topology}

For two real sequences ${\bf x} = (x_i : i \ge 1)$ and $ {\bf y} = (y_i : i \ge 1)$, define $\rho_\infty ({\bf x}, {\bf y})= \sum_{i=1}^\infty 2^{-i} (|x_i - y_i | \wedge 1)$. It is immediate that $(\mbbr^\mbbn, \rho_\infty)$ is a Polish space with $\sigma$-algebra ${\cal B}(\mbbr^\mbbn)$ generated by the open balls. Let ${\bf 0}_\infty$ denote the origin of $\mbbr^\mbbn$. We can use the framework of $\mbbM_0$ topology for the space of all measures $ \mbbM ( \mbbr^\mbbn \setminus \{{\bf 0}_\infty\}) := \mbbM(\mbbr^\mbbn_{\bf 0})$ \label{not_punctured_space_real_sequence} putting $\mathbb{S} = \mbbr^\mbbn$ and ${\bf s}_0 = {\bf 0}_\infty$. The obvious reason behind removing ${\bf 0}_\infty$ is that the regularly varying measures usually explode near the origin. We now need the concept of scaling (scalar multiplication). Define $\vartheta \cdot {\bf x} = (\vartheta x_i : i \ge 1)$ when ${\bf x} = (x_i : i \ge 1)$ for every $\vartheta > 0$. It is clear that the map $(\vartheta, {\bf x}) \mapsto \vartheta \cdot {\bf x}$ is continuous in $\vartheta$, associative and satisfies $1\cdot {\bf x} = {\bf x}$ for every ${\bf x} \in \mbbr^\mbbn$. The origin ${\bf 0}_\infty$ satisfies $\vartheta \cdot {\bf 0}_\infty = {\bf 0}_\infty$ for every $\vartheta > 0$ and  $0 < \rho_\infty({\bf x}, {\bf 0}_\infty) \le \rho_\infty({\bf 0}_\infty, \vartheta \cdot {\bf x})$ for every $\vartheta > 1$. There are many equivalent definitions for regularly varying measure (see subsection~3.2 in \cite{lindskog:resnick:roy:2014}) and  the following one among them will turn out to be very useful for us.
 
\begin{defn}[Regularly varying measures on $\mbbr^\mbbn$] \label{defn_regvar_measure}
A measure $\xi \in \mbbM (\mbbr^\mbbn_{\bf 0})$ is said to be regularly varying if there exists a measure $\varsigma \in \mbbM (\mbbr^\mbbn_{\bf 0})$ and a positive increasing and regularly varying  sequence of real numbers $(t_n : n \ge 1)$ ($\lim_{n \to \infty} t_{\lfloor n \vartheta \rfloor}/ t_n = \vartheta^\beta$ for every $\vartheta > 0$ and $\beta \ge 0$) such that $t_n \xi(n \cdot ) \hlconv \varsigma$ where $\xi(n \cdot {\sf A}) = \xi (\{n \cdot {\bf x} : {\bf x} \in {\sf A} \})$ for every ${\sf A} \in {\cal B}(\mbbr^\mbbn_{\bf 0})$.  
\end{defn}

It is known in literature (Theorem~3.1 in \cite{lindskog:resnick:roy:2014}) that the limit measure $\varsigma$ satisfies the following homogeneity property 
\aln{
\varsigma( \vartheta \cdot {\sf A}) = \vartheta^{- \alpha} \varsigma({\sf A}) \mbox{ for every } {\sf A} \in {\cal B}(\mbbr^\mbbn_{\bf 0}) \label{remark_homogeneity_limit}
}
and $ \alpha $ is called the {\it index of regular variation}. To stress the homogeneity property of the regularly varying measure $\xi$ in Definition~\ref{defn_regvar_measure}, we denote it by $\xi \in {\rm RV}_\alpha ( \mbbr^\mbbn_{\bf 0}, \varsigma) $.

\subsection{ Large deviations of the extreme positions}

%
%
%

\begin{ass}  \label{ass:joint:regvar:disp}
$\prob ({\bf X} \in \cdot) \in {\rm RV}_\alpha( \mbbr^\mbbn \setminus \{ {\bf 0}_\infty \}, \lambda_0)$ for some measure $\lambda_0 \in \mbbM(\mbbr^\mbbn_{\bf 0})$. 
\end{ass}


 Let $p_e$ \label{nota_pe} denote the probability of extinction and ${\cal S}$ denote the survival event $\bigcap_{n \ge 1}\{ Z_n \ge 1\}$. It is clear that $\prob({\cal S}) = 1 -p_e$. For any event ${\sf A}$, $\prob^*({\sf A})$ will denote the conditional probability $\prob( {\sf A}| {\cal S})$ and the corresponding conditional expectation will be denoted by $\exptn^*$. Some of the other necessary notations are as follows. 
\begin{enumerate}
\let\myenumi\theenumi
\let\mylabelenumi\labelenumi
\renewcommand{\theenumi}{N\myenumi}
\renewcommand{\labelenumi}{{\rm (\theenumi)}}

\item Let $U$ be an independent copy of $Z$. $\widetilde{U}$ denotes the random variable $U$ conditioned to stay positive i.e., $\prob(\widetilde{U} \in {\sf A}) = \prob (U \in {\sf A} | U >0)$ for every ${\sf A} \subset \mbbn$. \label{nota_u_condition_to_stay_positive}

\item For every $l \ge 1$, $(\widetilde{Z}_{l}^{(s)}: s \ge 1)$ is a collection of independent copies of the random variable $\widetilde{Z}_l$ which is the random variable $Z_l$ conditioned to stay positive. The collection $(\wt{Z}_l^{(s)} : s \ge 1, l \ge 1)$ is independent of the random variable $\wt{U}$.  \label{nota_Zl_condition_to_stay_positive}

\item Let $\pi_j : \mbbr^\mbbn \to \mbbr$ be a projection map such that  $\pi_j \Big( (x_i : i \ge 1) \Big) = x_j$ for all $j \ge 1$. \label{nota_projection_jth_coordinate}

\item We shall use $[i~:~j]$ to denote the set $\{i, i +1, \ldots, j\}$ for every pair $i < j$ of integers.   \label{nota_closed_interval_integers}

\item $|{\sf G}|$ denotes the cardinality of a set ${\sf G}$  and $\pow({\sf G})$ denotes the power set of ${\sf G}$ i.e. the collection of all subsets of ${\sf G}$ including null set $\emptyset$. \label{nota_cardinality_set}

\item Consider the measure $\lambda ({\sf A}) = \lambda_0({\sf A}) / \lambda_0({ {\cal O}})$ where ${\cal O} = \{{\bf x} \in \mbbr^\mbbn: |x_1| > 1 \}$ on $\mbbr^\mbbn_{\bf 0}$. Note that $\lambda_0({\cal  O}) < \infty$ as ${\cal O}$ is bounded away from ${\bf 0}_\infty $.    \label{defn_dashed_lamda}
\end{enumerate}

Recall (from Subsection~\ref{subsec_scope}) that the order of fluctuations of the extreme positions (at the $n$-th generation) is given by the sequence $b_n$ when Assumptions~\ref{ass_genealogy}, \ref{ass:marginal:identical:dist} and \ref{ass:joint:regvar:disp} hold. So we considered another sequence $(\gamma_n : n \ge 1)$ (see \eqref{eq:defn:gamman}) which grow faster than $b_n$. Then we introduced the  point process ${\bf N}_n $ (see \eqref{eq_scope_of_article}) which puts unit mass to each of the random variables in the collection $(\gamma_n^\inv S(\sfv) : |\sfv| = n)$. Recall also the sequence $r_n = (\mu^n \prob(|X_1| > \gamma_n) )^\inv$. It has been shown in Theorem~\ref{thm:extremal:process:ldp:iid:disp} that $r_n \prob({\bf N}_n \in {\mathscr E})$ converges to a positive finite limit when displacements are i.i.d. and ${\mathscr E} \in {\cal B}(\scrm_0)$ is chosen to be `nice'.  The following result is the generalization of the aforementioned theorem when  Assumption~\ref{ass:joint:regvar:disp} holds.


\begin{thm}[Large deviations of the extremal processes] \label{thm:main_result}
 Under the assumptions stated in \ref{ass_genealogy}, \ref{eq:assumption:branching:process} and \ref{ass:joint:regvar:disp}, we have $r_n \prob^*({\bf N}_n \in \cdot ) \hlconv m^*( \cdot )$ in the space $\mbbM(\scrm_0)$
 where for every ${\sf A} \in {\cal B}(\scrm_0)$,
\begin{align}
m^*({\sf A}) & = (1-p_e)^\inv \prob(U> 0) \sum_{l=0}^\infty \mu^{-(l+1)} \exptn \bigg[ \sum_{{\sf G} \in \pow([1~:~\widetilde{U}]) \setminus \{\emptyset\}}  \nonumber \\
& \hspace{.5cm} \lambda \Big( \mathbf{x} \in \mbbr^\mbbn : \sum_{s \in {\sf G}} \widetilde{Z}_l^{(s)} \bdelta_{x_s} \in {\sf A} \Big) \Big( \prob(Z_l >0) \Big)^{|{\sf G}|} \Big( \prob(Z_l = 0) \Big)^{\widetilde{U} - |{\sf G}|} \bigg] \label{eq:derived_lim_measure_K}.
\end{align} 
\end{thm}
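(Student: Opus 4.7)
The plan is to use Fact~\ref{fact:hult:samorodnitsky:a2}, which reduces the HLS-convergence $m_n\hlsconv m^*$ to the scalar limit
\[
\lim_{n\to\infty} r_n\exptn^*\big[F_{g_1,g_2,\epsilon_1,\epsilon_2}(N_n)\big] = m^*\big(F_{g_1,g_2,\epsilon_1,\epsilon_2}\big)
\]
for every pair of Lipschitz $g_1,g_2\in C_c^+([-\infty,\infty]\setminus\{0\})$ and every $\epsilon_1,\epsilon_2>0$; abbreviate $F=F_{g_1,g_2,\epsilon_1,\epsilon_2}$. Because $F(N_n)>0$ forces $N_n$ to charge a region bounded away from $0$, the tree must survive to generation $n$, so $\exptn^*[F(N_n)]=(1-p_e)^{-1}\exptn[F(N_n)]$ up to an error of $o(r_n^{-1})$; this supplies the $(1-p_e)^{-1}$ appearing in \eqref{eq:derived_lim_measure_K}.

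The main structural step is a one-large-jump expansion. Fix a truncation $c>0$ and call $X(\mbfu)$ \emph{big} when $|X(\mbfu)|>\gamma_n/c$. Since $\prob(|X_1|>\gamma_n)=r_n^{-1}\mu^{-n}$, the expected number of big displacements across the whole tree is $O(c^\alpha r_n^{-1})$, so configurations harbouring big displacements in two distinct root-to-leaf lineages contribute $O(c^{2\alpha}r_n^{-1})$ to $r_n\exptn[F(N_n)]$ and vanish as $c\to\infty$; meanwhile, on the complementary event the truncated BRW satisfies $\max_{|\mbfv|=n}|S(\mbfv)|=o(\gamma_n)$ in probability (using $\kappa_n=o(\gamma_n)$), so $F(N_n)=0$ eventually. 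One is therefore reduced to trajectories carrying exactly one big displacement, which I parameterise by the generation $n-l$ at which it occurs. Cutting the genealogy at generation $n-l-1$ produces $Z_{n-l-1}$ independent BRWs of depth $l+1$; by Kesten--Stigum \eqref{eq:kesten:stigum:condition}, $\mu^{-(n-l-1)}Z_{n-l-1}\to W$ in $L^1$ with $\exptn W=1$. Since $F\le 1$ and only a finite-mass portion of $\lambda$ supports non-vanishing $F$-mass, the per-$l$ term in the asymptotics is controlled by the geometric factor $C\mu^{-(l+1)}$, justifying dominated convergence when interchanging $\sum_l$ and $\lim_n$.

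The core calculation, and in my view the main obstacle, is the per-subtree limit. Fix a designated parent at generation $n-l-1$ and condition on it having $\widetilde{U}>0$ children (this is the prefactor $\prob(U>0)$). Decompose the event according to the random subset $G\subseteq\{1,\dots,\widetilde{U}\}$ of children whose depth-$l$ subtrees survive; by independence of displacements and subtree-survival this configuration carries weight $(\prob(Z_l>0))^{|G|}(\prob(Z_l=0))^{\widetilde{U}-|G|}$. Conditional on $G$, only the children in $G$ yield descendants at generation $n$, and the internal random walk inside each such subtree contributes only $O(\kappa_l)=o(\gamma_n)$, so a coupling argument shows that the rescaled descendant positions concentrate at $X_s/\gamma_n$ with multiplicity $\widetilde{Z}_l^{(s)}$. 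Passing the joint regular variation $\prob(\mathbf{X}\in\gamma_n\,\cdot\,)\in RV_\alpha(\mathbb{R}^\mathbb{N},\lambda)$ through its finite-dimensional projections via Fact~\ref{fact:projection:regular:variation} yields
\[
\lim_{n\to\infty} r_n\exptn\bigg[F\Big(\sum_{s\in G}\widetilde{Z}_l^{(s)}\bdelta_{X_s/\gamma_n}\Big)\bigg] = \int F(\varphi)\,\lambda\Big(\mathbf{x}:\sum_{s\in G}\widetilde{Z}_l^{(s)}\bdelta_{x_s}\in d\varphi\Big).
\]
Combining with $\exptn Z_{n-l-1}=\mu^{n-l-1}$ and the per-displacement scale $\prob(|X_1|>\gamma_n)=r_n^{-1}\mu^{-n}$ produces the coefficient $\mu^{-(l+1)}$ after the overall multiplication by $r_n$. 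Summing over $l\ge 0$, over $\widetilde{U}$, and over $G$ assembles \eqref{eq:derived_lim_measure_K}. The principal technical difficulties are: (i) transferring the $\mathbb{R}^\mathbb{N}$-regular variation through finite-dimensional projections while absorbing the Lipschitz error from the internal random walks inside surviving subtrees; (ii) obtaining uniform-in-$l$ tail control needed for Fubini; and (iii) ensuring that the multi-jump error is $o(r_n^{-1})$ uniformly in both the truncation level $c$ and the generation parameter $l$.
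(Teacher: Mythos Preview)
Your high-level strategy---reduce to the HLS test functionals via Fact~\ref{fact:hult:samorodnitsky:a2}, invoke one large jump, locate the large sibling group, decompose by the surviving children $G\subseteq[\widetilde U]$, and read off the limit from the joint regular variation of $\mathbf X$---matches the paper. The route you take to implement it, however, differs in a way that leaves a real gap.

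The paper does \emph{not} parameterise directly by the generation $n-l$ of the large jump and then sum over all $l$. Instead it introduces two auxiliary truncation parameters: it cuts the tree at a \emph{fixed} generation $n-K$ (Lemma~\ref{lemma:cutting_tree}), and then \emph{prunes} every subtree so that each vertex has at most $B$ children (Lemma~\ref{lemma:pruning}), followed by a regularisation to exact $B$-ary trees. Only after this does the regular-variation limit get taken, and it is applied to the displacement vector of an entire depth-$K$ $B$-ary subtree, which lives in the fixed finite-dimensional space $\widetilde R_B=\mathbb{R}^{B+B^2+\cdots+B^K}$. The final limit measure $m^*$ is then recovered by letting $B\to\infty$ and $K\to\infty$ in that order, with the sum over $l$ in~\eqref{eq:derived_lim_measure_K} emerging only \emph{after} these limits, from the algebraic structure of the measure $\tau_1^{(B)}$ in~\eqref{eq:defn:tau_tls}.

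The pruning step is not cosmetic: it is exactly what resolves the difficulties you label (i) and (ii). Your per-subtree computation requires interchanging $\lim_{n\to\infty}$ with the expectation over the \emph{unbounded} random variable $\widetilde U$ and the associated sum over $G\in\pow([\widetilde U])$. The $\mathbb{M}_0$-convergence hypothesis on $\mathbb{R}^{\mathbb N}$, passed through Fact~\ref{fact:projection:regular:variation}, only gives you the limit for functionals depending on \emph{fixed} finitely many coordinates; it does not furnish the uniformity in $\widetilde U$ needed for dominated convergence without further work. The paper sidesteps this entirely by forcing $\widetilde U\le B$ via pruning, computing the limit in $\mathbb{R}^B$, and then showing (Lemma~\ref{lemma:pruning}) that the pruning error vanishes as $B\to\infty$ uniformly in $n$. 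Likewise, your sum over $l\ge 0$ and the claimed ``dominated convergence when interchanging $\sum_l$ and $\lim_n$'' is handled in the paper by first restricting to $l\le K-1$ via Lemma~\ref{lemma:cutting_tree} and only afterwards letting $K\to\infty$ in the already-computed limit~\eqref{eq:defn:mstar:K}. In short, the two extra parameters $(K,B)$ are the mechanism by which the paper converts your acknowledged difficulties into routine $\epsilon$-estimates; your sketch names these difficulties but does not supply an alternative mechanism to resolve them.
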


We now would like to compare $m^*$ with $m^*_{iid}$ derived in Theorem~\ref{thm:extremal:process:ldp:iid:disp}. There are two new components in Theorem~\ref{thm:main_result} viz., presence of the leaves and joint regular variation the displacements. The presence of the leaves is responsible for the appearance of the conditional version of $Z_l$'s and $U$ (see Corollary~\ref{thm:extremes:jtregvar:noleaf}). The joint regular variation of the displacements have influenced the limit $m^*$ through $\lambda$ and also does not let the large values to localize (see Subsection~\ref{subsec_linearly_dep_disp}).  The limit also supports the cluster-formation phenomenon due to the random multiplicities of its atoms.  In the rest of this subsection, we shall derive consequences of Theorem~\ref{thm:main_result} when Assumption \ref{ass:joint:regvar:disp} holds.  The measure $\lambda$ appears in the limit instead of $\lambda_0$ because of the term $\prob(|X_1| > \gamma_n)$ in the scaling $r_n$ and regular variation of $\prob({\bf X} \in \cdot)$.

\begin{thm} [Large deviations of the maximum position] \label{thm:ldp:rightmost:joint:regvar}Under the assumptions stated in Theorem~\ref{thm:main_result}, we have 
\begin{align}
\lim_{n \to \infty} r_n \prob^* (M_n^{(1)} > \gamma_n x) = x^{-\alpha} {\rm C}_1 \label{eq:maxima:jtregvar}
\end{align}
for every $x > 0$ and the positive constant ${\rm C}_1$ is given by
\begin{align*}
{\rm C}_1 & = \frac{\prob(U > 0)}{1 - p_e} \sum_{l=0}^\infty \mu^{-(l+1)} \exptn \Big[ \sum_{{\sf G} \in \pow([1~:~\widetilde{U}]) \setminus \{\emptyset\}}  \big[ \lambda \big( \cup_{s \in {\sf G}} {\sf V}_s  \big) \big] \big( \prob(Z_l >0) \big)^{|{\sf G}|} \big( \prob(Z_l = 0) \big)^{\widetilde{U} - |{\sf G}|} \Big] 
\end{align*}
where  ${\sf V}_s =\big\{\pi_s^\inv (1,\infty)\big\} $ for every $s \ge 1$. 
\end{thm}

\begin{remark}[{\bf Large deviation of the minimum position and order statistics}]
Suppose that the assumptions in Theorem~\ref{thm:main_result} hold and $\wt{M}_n^{(1)} = \min_{|\sfv| = n} S(\sfv)$. Then we have $\lim_{n \to \infty} r_n \prob \big( \widetilde{M}_n^{(1)} < - \gamma_n x \big) = (1 - q) x^{- \alpha} {\rm C}_1$ where ${\rm C}_1$ is a positive constant given explicitly in Theorem~\ref{thm:ldp:rightmost:joint:regvar}.  As the large value of the extreme positions do not localize to a single value in the limit $m^*$, the large deviations of the order statistics make sense in this set-up. For simplicity, fix two positive real numbers $x_1 > x_2$. Define ${\sf E}_{x_1, x_2} = \{ \xi \in \scrm_0: \xi(x_i, \infty) \ge i \mbox{ for } i =1,2\}$. If we show that $\varnothing \notin {\rm cl}({\sf E}_{x_1, x_2})$ and $m^*(\partial {\sf E}_{x_1, x_2 }) = 0$ (see proof of Theorem~\ref{thm:extreme:positions:linear:dependent:disp}), then we can apply Theorem~\ref{thm:main_result} to obtain
\aln{
\lim_{n \to \infty} r_n \prob^* (M_n^{(1)} > \gamma_n x_1; M_n^{(2)} > \gamma_n x_2)  = \lim_{n \to \infty} r_n \prob^* \big( {\bf N}_n \in {\sf E}_{x_1, x_2} \big) = m^*({\sf E}_{x_1, x_2}). 
}
For a further simplification, we need a detailed description of the measure $\lambda$ (see Theorem~\ref{thm:extreme:positions:linear:dependent:disp}).  
\end{remark}

\begin{cor}[${\sf T}$ does not have any leaf]\label{thm:extremes:jtregvar:noleaf}
Suppose that the conditions stated in the Theorem~\ref{thm:main_result} hold. Additionally, we assume  $\prob(Z \ge 1) = 1$.  Then, we have $r_n \prob^*({\bf N}_n \in \cdot) \hlconv m^*_0(\cdot)$
 where 
\aln{
m^*_0({\sf A}) = \sum_{l =0}^\infty \mu^{- (l+1)} \exptn \Big( \lambda \big( \big\{ {\bf x} \in \mbbr^\mbbn: \sum_{s=1}^U Z_l^{(s)} \bdelta_{x_s} \in {\sf A} \big\} \big) \Big).
}
As a consequence, we have 
\aln{
\lim_{n \to \infty} r_n \prob \big( M_n^{(1)} > \gamma_n x \big) = x^{-\alpha} \sum_{l=0}^\infty \mu^{-(l+1)} \exptn \big( \lambda \big[ \bigcup_{s=1}^U {\sf V}_s \big] \big)
}
where the family of sets $({\sf V}_s : s \ge 1)$ is  introduced in Theorem~\ref{thm:ldp:rightmost:joint:regvar}.
\end{cor}


\begin{remark}[{\bf Asymptotically tail-independent displacements and ${\sf T}$ has leaves}] \label{thm:ldp:pp:iid:disp:leaf} 
We would like to discuss here how Theorem~\ref{thm:extremal:process:ldp:iid:disp} is connected to Theorem~\ref{thm:main_result}. When $X_i$'s are i.i.d. and satisfy \eqref{eq:marginal:regvar}, there exists a sequence $(t_n : n \ge 1)$ such that  $t_n \prob(n \cdot {\bf X} \in \cdot) \hlconv \lambda_{iid}(\cdot)$ in $\mbbM(\mbbr^\mbbn_{\bf 0})$ (see Subsection 4.5.1 in \cite{lindskog:resnick:roy:2014} and Subsection~3.1 in \cite{resnick:roy:2014} for a detailed discussion)  where
\aln{
 \lambda_{iid} = \sum_{l=1}^\infty \bigotimes_{j=1}^{l-1} \bdelta_0 \otimes \nu_\alpha \bigotimes_{j'= l+1}^\infty \bdelta_0. \label{eq:product:form:iid:disp}
}
 We call the displacements ${\bf X} = (X_i : i \ge 1)$ to be {\bf asymptotically tail-independent} if 
$ \prob({\bf X} \in \cdot)  \in {\rm RV}_\alpha(\mbbr^\mbbn_{\bf 0}, \lambda_{iid}).$  
Let the displacements be asymptotically tail-independent and the underlying branching process satisfy \eqref{eq:assumption:branching:process}. Then, we have $ r_n \prob^*({\bf N}_n \in \cdot ) \hlconv \wt{m}^*_{iid}(\cdot)$ in $\mbbM(\scrm_0)$ where 
\aln{
\widetilde{m}^*_{iid}(\cdot) = (1 - p_e)^\inv \sum_{l=0}^\infty \mu^{- l} \prob(Z_l > 0) \exptn \big( \nu_\alpha[ \{ x \in \mbbr_0 : \widetilde{Z}_l \bdelta_x \in \cdot \}] \big). \label{eq_ldp_pp_lim_iid_with_leaf}
}
Define $\zeta = \sum_{l =0}^\infty \mu^{- l} \prob(Z_l > 0)$. Recall that $\wt{M}_n^{(1)}$ denotes the minimum position in the $n$-th generation.  For every $x > 0$, we have 
\aln{
& \lim_{n \to \infty} r_n \prob^*(M_n^{(1)} > \gamma_n x) = \frac{\zeta q }{x^\alpha (1 - p_e)}   \mbox{ and } \lim_{n \to \infty} r_n \prob^* \big( \widetilde{M}_n^{(1)} < - \gamma_n x  \big) = \frac{\zeta (1 - q)}{ x^{ \alpha} ( 1- p_e )}. \label{eq_ldp_max_iid_with_leaf}
}
If we additionally assume $\prob(Z_1 \ge 1) = 1$, then \eqref{eq_ldp_pp_lim_iid_with_leaf} and \eqref{eq_ldp_max_iid_with_leaf} reduces to Theorem~\ref{thm:extremal:process:ldp:iid:disp} and \ref{thm:ldp:maxima:iid}. For the detailed derivation of \eqref{eq_ldp_pp_lim_iid_with_leaf} and \eqref{eq_ldp_max_iid_with_leaf}, see proofs of Theorem~\ref{thm:extremal:process:ldp:iid:disp} and \ref{thm:ldp:maxima:iid}. 
\end{remark}

\begin{cor}[ ${\rm b}$-ary genealogical tree] Suppose that $\prob(Z = {\rm b} ) =1$ in definition of $\scrl$ (see \eqref{defn:progeny:pp:BRW}) and the displacements are asymptotically tail-independent. Then we can modify the definition of $r_n$ and $\gamma_n$ with $\mu = {\rm b}$ to see that 
\aln{
\lim_{n \to \infty} r_n \prob( {\bf N}_n \in {\sf B}) = \sum_{l=0}^\infty {\rm b}^{- l} \nu_\alpha (\{x \in \mbbr: {\rm b}^l \bdelta_x \in {\sf B}\})
} 
for every nice subset ${\sf B} \in {\cal B}(\scrm_0)$. As a consequence, we can derive 
\aln{
& \lim_{n \to \infty} r_n \prob(M_n^{(1)} > \gamma_n x) = \frac{q  {\rm b}}{x^\alpha ( {\rm b} -1)} \mbox{ and } \lim_{n \to \infty} r_n \prob( \widetilde{M}_n^{(1)} < -  \gamma_n x) = \frac{(1 - q) {\rm b}}{x^{ \alpha} ({\rm b}-1)}.
} 
\end{cor}

\subsection{Dependent displacements} \label{subsec_linearly_dep_disp}

Here we consider a simple case where the displacements from the same parent are assumed to be linearly dependent. Consider a positive random variable $Y$ such that 
$\prob(Y > y) = y^{- \alpha} {\rm L}(y) \mbox{ for all } y > 0$
where ${\rm L}$ is a slowly varying function. It is clear that  $\prob(Y \in \cdot) \in {\rm RV}_{- \alpha}((0, \infty), \nu_\alpha^+)$  such that $\nu_\alpha^+(x, \infty) = x^{- \alpha}$ for every $x > 0$.  Consider the collection ${\bf Y} = (Y_0, Y_1, Y_2)$ of independent copies of the random variable $Y$ and define 
\alns{
X_i = \phi Y_{i -1} + Y_i \mbox{ for } i =1,2  
}
for some deterministic constant $\phi > 1$.  We now consider the BRW constructed based on $ \scrl_\phi = \sum_{i =1}^2 \bdelta_{X_i}$. It is clear that each particle produces exactly two children and the displacements (from the same parent) form ${\rm MA}(2)$ process with deterministic coefficient. It can be shown that ${\bf X} = (X_1, X_2)$ is jointly regularly varying on the space $(0, \infty)^2$.  We choose a sequence $(\gamma_n^{(\phi)}: n \ge 1)$ such that 
\aln{
\lim_{n \to \infty} 2^n \prob(Y > \gamma_n^{(\phi)}) = 0. \label{eq:defn:gamma_n:phi}
}
Define
$r_n^{(\phi)} = (2^n \prob( Y > \gamma_n^{(\phi)}))^\inv$ and  ${\bf N}_n^{(\phi)}: = \sum_{|\sfv| = n} \bdelta_{(\gamma_n^{(\phi)})^\inv S(\sfv)}. 
$
 We derive the large deviations for the extreme positions in the following theorem.

\begin{thm} \label{thm:extreme:positions:linear:dependent:disp}
Consider a BRW with the point process $\scrl \eqd \scrl_\phi$. There exists a measure $m^*_\phi \in \mbbM(\scrm_0)$ such that $r_n^{(\phi)} \prob \big( {\bf N}_n^{(\phi)} \in \cdot \big) \hlconv m^*_\phi(\cdot)$ where  for every ${\sf A} \in {\cal B}(\scrm_0)$,
\aln{
m^*_\phi({\sf A}) & =  \sum_{l=0}^\infty 2^{- l } \Big[(1 + \phi^\alpha) \nu_\alpha^+ \big( \{ z >0 : 2^l \bdelta_z \in {\sf A} \} \big)  +  \nu_\alpha^+ \big( \{ z > 0 : 2^l (\bdelta_z + \bdelta_{\phi z}) \in {\sf A} \}  \big) \Big]. \label{eq:expression:limit:measure:linearly:dependent:disp}
}
Furthermore, for $0 < x_2 < x_1$,  
\aln{
& \lim_{n \to \infty} r_n^{(\phi)} \prob \big( \{ M_n^{(1)} >  \gamma_n^{(\phi)} x_1 \} \cap \{ M_n^{(2)} >  \gamma_n^{(\phi)} x_2 \} \big) =  \frac{ (1 + 2\phi^\alpha)} {x_1^{ \alpha}} + (\max(x_2,  x_1/\phi))^{- \alpha}  \label{eq_two_largest_order_stat_lin_dep} \\
& \mbox{and } ~\lim_{n \to \infty} r_n^{(\phi)} \prob( M_n^{(1)} > \gamma_n^{(\phi)} x_1) = (1 + 2 \phi^\alpha) x_1^{- \alpha}.  \label{eq_max_lin_dep}
}
\end{thm}

\begin{cor}[{\bf Asymptotic full dependence}] Suppose that $X_i = X_1$ for every $i \ge 1$. Then we have $\scrl \eqd Z \bdelta_{X_1}$ (see page~195 in \cite{resnick:2007}).  This means that the displacements from the same parent are the same.  Then it is clear that $\prob({\bf X} \in \cdot) \in {\rm RV}_\alpha(\mbbr^\mbbn_{\bf 0}, \lambda_f)$ where $\lambda_f (\cdot) = \nu_\alpha(x: ~x. {\bf 1}_\infty \in \cdot )$ and ${\bf 1}_\infty$ is  sequence with all elements equal to $1$. If we use this fact in Theorem~\ref{thm:main_result}, then we have 
 \aln{
  \lim_{n \to \infty } r_n \prob^* \big( {\bf N}_n \in {\sf A} \big) & =  \frac{\prob(U > 0)}{(1 - p_e)} \sum_{l = 0}^\infty \mu^{-(l + 1)} \exptn \Big[ \sum_{k =1}^{\wt{U}} \binom{\wt{U}}{k} \big( \prob[Z_l > 0] \big)^k \big( \prob(Z_l = 0) \big)^{\wt{U} - k}  \nonumber \\
& \hspace{2cm} \nu_\alpha \big( \{x \in \mbbr: \bdelta_x \sum_{i= 1}^k  \wt{Z}_l^{(i)} \in {\sf A}\} \big) \Big]  \label{eq_ldp_lim_measure_full_dep}
}
Recall that ${\sf E}_x = \{\xi \in \scrm : \xi(x, \infty) \ge 1\}$. It will be shown in the proof of Theorem~\ref{thm:ldp:rightmost:joint:regvar} that $\varnothing \notin {\rm cl}({\sf E}_x)$ and the limit measure puts zero mass on the boundary $\partial {\sf E}_x$. Therefore, after some algebra, we have
\alns{
& \lim_{n \to \infty} r_n \prob^*(M_n^{(1)} \ge \gamma_n x) = \lim_{n \to \infty} r_n \prob^* \big( {\bf N}_n \in {\sf E}_x \big) =\frac{q\prob(U > 0)}{(1 - p_e) x^{ \alpha}} \sum_{l =0}^\infty \mu^{- (l + 1)} \exptn \Big[ 1 - \big( \prob[Z_l = 0] \big)^{\wt{U}} \Big]
}
as $\wt{Z}_l^{(i)} \ge 1$ almost surely for every $l \ge 0$ and $i \ge 1$.  
\end{cor}

\subsection{Discussion and open questions} \label{sesubsec:discussion:open:probs}

The proof of Theorem~\ref{thm:main_result} heavily uses the modern tools developed to study the large deviations of the point processes. The proof has been divided into five main steps which are (i)~at the most one large displacement on a geodesic path, (ii)~cutting the tree, (iii)~one subree in the forest suffices, (iv)~pruning the subtree and (v)~regularization.  We did not change the names of the four steps from \cite{bhattacharya:hazra:roy:2017} only because of the similarity in the modification of the genealogical structure at each of these step.  The step `one subtree in the forest suffices' is new where we show that the probability of the large displacements in the forest is proportional to the that of a subtree (see Propositions~\ref{lemma:one:tree:large:disp}  and \ref{lemma:one:subtree}). It does not need a special mention that the principle of `a bunch of large displacements' turns out to be the thread which binds these five steps together. We now mention a few open questions which are not answered in this study.

Our study depends heavily on the Kesten-Stigum condition. It would be natural to see how this picture changes when Kesten-Stigum condition is violated.  For example, let us consider the case where the progeny mean may be finite but, Kesten-Stigum condition may not hold. In that case, one has to use the Seneta-Heyde scaling sequence to normalize $(Z_n : n \ge 1)$ (see Theorem~3 at page~30 in \cite{athreya:ney:1972}). In this case, the limit random variable $W$ does not have a finite expectation which has been used in the proof of Proposition~\ref{lemma:one:subtree}.  We have a strong belief that the whole picture might be different. Another important case would be where the progeny mean is infinite. Recently, in \cite{ray:hazra:roy:soulier:2020}, the weak limit of the extremal processes has been obtained and it is proved that the limit does not have any cluster/decoration (counter-intuitive to the predictions in \cite{brunet:derrida:2011}). We believe that the study of the large deviations of extreme positions in this framework will be interesting and exciting as may lead to a different limit measure.

\section{Proof of Theorem~\ref{thm:main_result} }\label{sec:proofs}

\subsection{Proof of Theorem~\ref{thm:main_result} based on auxiliary results}

 In view of Fact~\ref{fact:HLSconv}, it is enough to establish
\aln{
 &  \lim_{n \to \infty} r_n \exptn^* \Big(F_{g_1, g_2, \epsilon_1, \epsilon_2} ({\bf N}_n) \Big)  = \int_{\scrm_0} F_{g_1, g_2, \epsilon_1, \epsilon_2}(\nu) m^*(\dtv \nu) \label{eq:aim:theorem:main:result}
}
to prove Theorem~\ref{thm:main_result}. This will be achieved through the following approximation steps. Recall that $X(\sfv)$ and $S(\sfv)$ denote the displacement and the position respectively of the vertex $\sfv$. ${\sf I}(\sfv)$ \label{nota_Iv} is used to denote the geodesic path ${\varpi} \mapsto \sfv $ where ${\varpi}$ denotes the root of the genealogical tree ${\sf T}$.  The first proposition connects local extreme positions to the global extreme displacements.

\begin{propn}[At the most one large displacement can occur on a typical geodesic path]\label{lemma:one_large_jump}
Define 
\aln{
\widetilde{\bf N}_n = \sum_{|\sfv| = n} \sum_{\sfu \in {\sf I}(\sfv)} \bdelta_{\gamma_n^\inv X(\sfu)}. \label{eq_defn_wtNn}
}
Under the assumptions stated in Theorem~\ref{thm:main_result}, for every $g_1, g_2 \in \ccr$ and $\epsilon_1, \epsilon_2 >0$, we have
\aln{
\lim_{n \to \infty} r_n \big| \exptn^* \Big(  F_{g_1,g_2, \epsilon_1, \epsilon_2}({\bf N}_n)  \Big)- \exptn^* \Big(  F_{g_1, g_2, \epsilon_1, \epsilon_2}(\wt{\bf N}_n) \Big) \big| = 0.
}
\end{propn} 

We then cut the tree at the $(n - \kappa)$-th generation and create a forest. The height of the each of the subtrees in the forest is at the most $\kappa$. We use ${\sf u} \mapsto {\sf v}$ to denote the geodesic path from the vertex $\sfu$ to its descendant $\sfv$ including $\sfu$. \label{nota_geodesic_path}   Define 
\aln{
\widetilde{\bf N}_{n, \kappa} = \sum_{|\sfv| = n} ~~\sum_{\sfu \in {\sf I}({\sfv}) : |\sfu \mapsto \sfv| \le \kappa - 1} \bdelta_{\gamma_n^\inv X(\sfu)}. 
}
Note that $\wtbfN_{n, \kappa}$ does not contain any information about the first $(n - \kappa)$ generations. In the next proposition, we show that the last few generations can only contain the large displacements with high probability.

\begin{propn}[The forest is most likely to contain the large displacements] \label{lemma:cutting:tree}

Under the assumptions stated in Theorem~\ref{thm:main_result}, for every $g_1, g_2 \in \ccr$ and $\epsilon_1, \epsilon_2 > 0$, we have
\aln{
\lim_{\kappa \to \infty}\limsup_{n \to \infty} r_n \big| \exptn^* \big[F_{g_1,g_2,\epsilon_1, \epsilon_2} (\wtbfN_n) - F_{g_1, g_2, \epsilon_1, \epsilon_2}(\wt{\bf N}_{n , \kappa}) \big] \big| = 0. \label{eq:cutting:tree:aim}
}
\end{propn}

Recall that ${\sf D}_i$ denotes the collection of vertices in the $i$-th generation and $Z_i$ denotes the cardinality of ${\sf D}_i$ for every $i \ge 0$. The forest will be denoted by $(\sfT^{(i)}_\kappa : 1 \le i \le Z_{n- \kappa})$ using the lexicographic order of the Ulam-Harris labels of the vertices in ${\sf D}_{n - \kappa}$.   Note that the displacement of the root of each of the subtrees in the forest is zero according to our convention. Let ${\varpi }_i$ denote the root of the subtree ${\sf T}_\kappa^{(i)}$ for $1 \le i \le Z_{n - \kappa}$. These notations lead to 
\aln{
\widetilde{\bf N}_{n, \kappa} =\sum_{i =1}^{Z_{n - \kappa}} \sum_{\sfu \in \sfT_\kappa^{(i)} \setminus \{{\varpi}_i\}} A(\sfu) \bdelta_{\gamma_n^\inv X(\sfu)} \label{eq:alt:expression:N:nkappa}
}
where $A {(\sfu)}$ \label{nota_Au} denotes the number of descendants of the vertex $\sfu$ in the $\kappa$-th generation of the tree $\sfT^{(i)}_\kappa$. If the $\kappa$-th generation of the subtree $\sfT^{(i)}_\kappa$ is empty (due to leaves), then we define $A(\sfu) = 0$ for all $\sfu \in \sfT^{(i)}_\kappa$. 


\noindent{\bf One subtree with large displacement is responsible for the large deviations.} We shall now show that at the most one of the subtrees contain the large displacement with high probability. So, it is enough to study large displacements in this tree and \rtext{count} their descendants with special care.  This is the most important observation and very specific for the large deviations. The proof heavily relies on the fact that the forest contains i.i.d. subtrees.  


 Define $\varphi = \min (\varphi_1, \varphi_2)$ where $\varphi_j$'s are chosen in such a fashion that ${\rm support}(g_j) = \{|x| : g_j(x) > 0\} \subset (\varphi_j, \infty)$ for $j =1,2$.  Define the event 
\aln{
{\sf H}_n^{(i)} := \Big\{ \sum_{\sfu \in {\sf T}_\kappa^{(i)} \setminus \{{\varpi }_i\}} \bdelta_{ \gamma_n^\inv |X(\sfu)| } (\varphi/2, \infty) \ge 1 \Big\} \mbox{ for every  } i \in [1: Z_{n - \kappa}]. 
}
Define another event
\aln{
 {\sf Q}_n^{(1)} := \bigcup_{i = 1}^{Z_{n - \kappa}} \Big\{ {\sf H}_n^{(i)} \bigcap \bigcap_{i' \in [1:~Z_{n - \kappa}] \setminus \{i \}} \big( {\sf H}_n^{(i')} \big)^c   \Big\} \label{eq_defn_qn_one}
}
which denotes that exactly one subtree can contain the large displacements. Finally, we define the events 
\aln{
{\sf Q}_n^{(0)} = \bigcap_{i  \in [1: ~{Z}_{n - \kappa}]} ({\sf H}_n^{(i)})^c \mbox{ and } {\sf Q}_n^{(\ge 2)} = ({\sf Q}_n^{(0)} \cup {\sf Q}_n^{(1)})^c. \label{eq_defn_qn_two}
}  

\begin{lemma}[Large displacements are most likely to occur in at most one of the subtrees] \label{lemma:one:tree:large:disp}
Under the assumptions stated in Theorem~\ref{thm:main_result}, for every $\kappa \ge 1$, we have $\lim_{n \to \infty} r_n \prob^* ({\sf Q}_n^{(\ge 2)}) = 0$. 
\end{lemma}

For every $i  \in[1~:~ Z_{n - \kappa}]$, define 
\aln{
\wtbfN_{n, \kappa}^{(i)} = \sum_{\sfu \in \sfT_\kappa^{(i)}} A(\sfu) \bdelta_{\gamma_n^\inv X(\sfu)}.
}
It is then clear that the point process $\wtbfN_{n, \kappa}$ is superposition of the i.i.d. point processes attached to the forest $(\sfT_\kappa^{(i)}:  i \in [1~:~ Z_{n - \kappa}])$. Additionally, it can be shown that the point process associated to each of the subtrees have regularly varying tail using $\mbbM_0$ convergence on the space $\mbbM(\scrm_0)$.  Therefore, it is natural to guess that \rtext{at most} one of them contain a bunch of the large displacements with high probability. We can call this phenomenon as ``principle of a point process with large atoms'' and implies that it is sufficient to study the large deviations of the point process associated with a typical  subtree  and simplifies our analysis.  

\begin{propn}[One subtree with large displacements suffices] \label{lemma:one:subtree}
  Under the assumptions stated in Theorem~\ref{thm:main_result}, for every $\kappa \ge 1$ and $\varrho \ge 1$, we have
\alns{
\lim_{n \to \infty} \frac{ r_n \exptn^* \Big( F_{g_1,g_2, \epsilon_1,\epsilon_2} \big[\wtbfN_{n, \kappa}\big] \Big) }{  \exptn \Big(  F_{g_1,g_2, \epsilon_1, \epsilon_2} \big[ \wtbfN_{n, \kappa}^{(1)} \big]  \Big) / \prob(|X_1| > \gamma_n) } = \mu^{- \kappa} (1 - p_e)^\inv.
}
\end{propn}

In the next step, we shall prune the subtree $\sfT_\kappa^{(1)}$. Let $\varrho$ be a large integer satisfying $\exptn(Z_1 \wedge \varrho) > 1$. Then we start form the root and modify the descendants of each vertex $\sfv \in {\sf T}_\kappa^{(1)}$, by keeping the first $\varrho$ children (honoring the lexicographic order of their Ulam-Harris labels) and deleting the    others with their line of descendants. The pruned version of the subtree will be denoted by $\mathbb{T}_{\kappa}^{(1)}$\label{not_pruned_subtree}. Define 
\aln{
\wtbfN_{n, \kappa, \varrho}^{(1)} = \sum_{\sfu \in \mbbt_\kappa^{(1)} \setminus \{{\varpi}_1\}} A^{(\varrho)}(\sfu) \bdelta_{\gamma_n^\inv X(\sfu)}.
}
where $A^{(\varrho)}(\sfu )$ denotes the number of displacements of $\sfu$ in the $\kappa$-th generation of $\mathbb{T}_\kappa^{(1)}$. 


\begin{propn}[The pruned subtree contains the large displacement with high probability] \label{propn_pruned_subtree}
If the assumptions in Theorem~\ref{thm:main_result} holds, then for every $\kappa \ge 1$, we have
\aln{
\lim_{\varrho \to \infty} \limsup_{n \to \infty} \big[ \prob(|X_1| > \gamma_n) \big]^\inv \exptn \Big| \hlsfun(\wtbfN_{n, \kappa}^{(1)}) - \hlsfun(\wtbfN_{n, \kappa, \varrho}^{(1)}) \Big| = 0.
}
\end{propn}


In the step `regularization', we make the subtree $\mbbt_\kappa^{(1)}$ regular by adding necessary child/children to a vertex if it has less than $\varrho$ children. The regularized version of $\mbbt_\kappa^{(1)}$ will be denoted by $\scrt_\kappa^{(1)}$.\label{not_regular_tree}  If $\sfu \in \scrt_\kappa^{(1)} \setminus \mbbt_\kappa^{(1)}$, then we define $A^{(\varrho)}(\sfu) = 0$. We also attach new displacement $X'(\sfu)$ to each vertex $\sfu \in \scrt_\kappa^{(1)}$ such that the displacements of $(X'({(\sf u,i)}):~ i \in [1~:~\varrho]) $ is an independent copy of the random vector ${\bf X}_{|\varrho} = (X_i : i \in [1:\varrho])$.  It then follows that 
\aln{
\wtbfN_{n, \kappa, \varrho}^{(1)} \eqd \sum_{\sfu \in {\scrt}_\kappa^{(1)} \setminus \{{\varpi}_1\}} A^{(\varrho)}(\sfu) \bdelta_{\gamma_n^\inv X'(\sfu)}.
}
We denote the point processes in the right hand side by $\wtbfN_{n, \kappa, \varrho}^{(1)}$ with a slight abuse of notation. In the following proposition, we derive and identify explicitly the limit of $\exptn(\hlsfun(\wtbfN_{n, \kappa, \varrho}^{(1)}))/ \prob(|X_1| > \gamma_n)$ which completes the proof of Theorem~\ref{thm:main_result}.

\begin{propn}[Emergence of the limit measure $m^*$] \label{lemma:computing:limit}
If the assumptions in Theorem~\ref{thm:main_result} hold, then we have
\aln{
& \lim_{\kappa \to \infty} \lim_{\varrho \to \infty} \lim_{n \to \infty}  \frac{\exptn \big( F_{g_1, g_2,\epsilon_1, \epsilon_2}(\wt{\bf N}^{(1)}_{n, \kappa, \varrho}) \big)}{ \mu^{\kappa} \prob (|X_1| > \gamma_n) }  = (1 - p_e) \int_{\scrm_0} F_{g_1,g_2, \epsilon_1, \epsilon_2}( \nu) m^*(\dtv \nu).  \label{eq:propn:comp:lim}
} 
\end{propn}



\section{Proofs of the auxiliary results}

\subsection{Proof of Proposition~\ref{lemma:one_large_jump}} \label{subsec:one_large_jump}

We shall first show that there could be at the most one large displacement on a typical geodesic path by establishing 
\aln{
\lim_{n \to \infty} r_n \prob^*({\sf G}_n^c) = 0 \mbox{ where } {\sf G}_n = \Big( \bigcup_{|\sfv | = n}  \Big\{ \sum_{\sfu \in {\sf I}(\sfv) } \bdelta_{|X(\sfu)|} (\gamma_n n^{-3}, \infty) \ge 2 \Big\}\Big)^c.  \label{eq:lemma_olj_amo_1}
}
Then consider two measures $\xi_1$ and $\xi_2$ living on the space $\mbbr_0$. It follows from basic algebra and the inequality $|e^{- x} - e^{- y}| \le |x - y|$ for all  $x, y > 0$ that 
\aln{
\big| \hlsfun(\xi_1) - \hlsfun(\xi_2) \big| \le 2 \wedge \Big[ 2 \sum_{i =1}^2 |\xi_1(g_i) - \xi_2(g_i)|  \Big]. \label{eq_comp_ineq_hlsfun_two_measures}
}
This inequality combined with the claim in \eqref{eq:lemma_olj_amo_1} implies that 
\alns{
r_n \exptn^* \Big[ \Big| \hlsfun(\wtbfN_n) - \hlsfun({\bf N}_n)\Big| \mbbo({\sf G}_n^c) \Big]  = o(1). 
}
If we use the inequality in \eqref{eq_comp_ineq_hlsfun_two_measures} for the point processes $\wtbfN_n$ and ${\bf N}_n$ on the event ${\sf G}_n$, then the claim of the proposition reduces to 
\aln{
\limsup_{n \to \infty} r_n \exptn^* \Big[ |\wtbfN_n(g) - {\bf N}_n(g) | \mbbo({\sf G}_n) \Big] = 0 \label{eq_olj_main_aim_comp}
}
for every $g \in \ccr$. 

\noindent{\bf Proof of \eqref{eq_olj_main_aim_comp}.} As the probability of survival is positive, replacing $\exptn$ by $\exptn^*$ will not affect the asymptotics of the conditional expectation in \eqref{eq_olj_main_aim_comp}.  Let $\wt{\varphi} > 0$  be chosen in such \rtext{a way} that $\inf\{ |x| : g(x) > 0 \} \subset (\wt{\varphi}, \infty)$ and $T(\sfv)$ be the maximum of the absolute values of the displacements on the path ${\sf I}(\sfv)$ when $\sfv \in {\sf D}_n$. On the event $\{ \max_{|\sfv| = n} T(\sfv) < \gamma_n \wt{\varphi} /2 \} \cap {\sf G}_n$, it follows that $\wtbfN_n(g) = 0$ almost surely for large enough $n$. On the same event, it also follows that $|S(\sfv) - T(\sfv)| \le  \gamma_n n^{-2}$ for every $n \ge 1$ which implies that ${\bf N}_n(g) = \sum_{|\sfv| = n} g(\gamma_n^\inv S(\sfv)) = 0$ almost surely for large enough $n$. For large enough $n$, we then have \rtext{ the } following almost sure upper bound for the random variable inside expectation in \eqref{eq_olj_main_aim_comp}
\aln{
\sum_{|\sfv| = n} \Big| g \big[ \gamma_n^\inv S(\sfv) \big] - g \big[ \gamma_n^\inv T(\sfv) \big] \Big| \mbbo\Big( \{\max_{|\sfv| = n}T(\sfv) > \gamma_n \wt{\varphi}/2 \} \cap {\sf G}_n \Big). \label{eq_olj_Sv_Tv_upper_bound}
}
As $g$ is bounded and continuous, $g$ is uniformly continuous on the compact set $\{x \in \mbbr : |x| \ge  \wt{\varphi} \}$ and let $\norm{g}$ denote the modulus of uniform continuity of the function $g$.  This fact yields the following almost sure upper bound for \eqref{eq_olj_Sv_Tv_upper_bound}
\aln{
& \norm{g} \gamma_n^\inv \sum_{|\sfv| = n }  \big| S(\sfv) - T(\sfv) \big| \mbbo \big( \{ \max_{|\sfv| = n} T(\sfv) > \gamma_n \wt{\varphi}/2 \} \cap {\sf G}_n \big) \nonumber \\
&=  \norm{g} n^{-2}  Z_n \mbbo \Big( \big\{ \max_{|\sfv| = n} T(\sfv) > \gamma_n \wt{\varphi}/2 \big\} \cap {\sf G}_n  \Big)  \le \norm{g}  n^{-2} Z_n \mbbo \big( \{ \max_{|\sfv| = n} T(\sfv) > \gamma_n \wt{\varphi}/2 \} \big). \label{eq_olj_Tv_upper_bound}
}
Observe now that $T(\sfv)  \eqd \max_{1 \le i \le n} |X_1^{(i)}|$ for every $\sfv \in {\sf D}_n$ where $(X_1^{(i)} : i \ge 1)$ are independent copies of the random variable $X_1$ using the fact that the displacements on a path are i.i.d. Then the union bound for $T(\sfv)$ in \eqref{eq_olj_Tv_upper_bound} combined with the independence of branching mechanism and displacements, yields
\aln{
r_n \exptn \big[ \big| \wt{\bf N}_n(g) - {\bf N}_n(g) \big| \big] \le \norm{g} n^\inv  \frac{\prob(|X_1| > \gamma_n \wt{\varphi}/2)}{\prob(|X_1| > \gamma_n)} = o(1).
} 
So the proof is complete. 


\noindent{\bf Proof of \eqref{eq:lemma_olj_amo_1}.} As the probability of survival is positive, we can replace $\prob^*$ by $\prob$ in \eqref{eq:lemma_olj_amo_1} without affecting its asymptotic behavior. Let $(X_1^{(i)} : i \ge 1)$ be a collection of i.i.d. copies of the random variable $X_1$. Using union bound for the event ${\sf G}_n^c$ combined with the independence between branching mechanism and displacements, and $\sum_{\sfu \in {\sf I}(\sfv)} \bdelta_{\gamma_n^\inv |X({\sfu})|} \eqd \sum_{|\sfv| = n} \bdelta_{\gamma_n^\inv |X_1^{(i)}|}$ for every $\sfv \in {\sf D}_n$; we get following upper bound for $\prob({\sf G}_n^c)$
\aln{
\mu^n \prob \big( \sum_{i =1}^n \bdelta_{\gamma_n^\inv |X_1^{(i)}|} (n^{-3}, \infty) \ge 2 \big) \sim {\rm C}_1 \mu^n n^2 \big[ \prob(|X_1| > \gamma_n n^{- 3}) \big]^2 \label{eq_asymptotic_order_amo_event}
}
where ${\rm C}_1$is some positive constant. For the derivation of the \rtext{asymptotic} order of the probability in the left hand side of \eqref{eq_asymptotic_order_amo_event}, we have used the fact that $\prob({\mathscr B}_n \ge 2) = O(n^2 p_n^2)$ where $\mathscr{B}_n $ is ${\rm Binomial}(n,p_n)$  with $np_n = O(1)$. The upper bound obtained in \eqref{eq_asymptotic_order_amo_event} leads to the following upper bound for $r_n \prob( {\sf G}_n^c)$ 
\aln{
{\rm C}_1 ~\big[ n^2 \prob(|X_1| > \gamma_n) \big] \Big[ \frac{\prob(|X_1| > \gamma_n n^{-3} \wt{\varphi}/2)}{\prob(|X_1| > \gamma_n)} \Big]^2 \label{eq_upper_bound_amo}
}
The ratio can be dealt with the help of Potter's bound  (see Proposition~0.8(ii) in \cite{resnick:1987}) as $n^{-3} \gamma_n \to \infty$. Fix an $\eta \in (0,1)$. Then Potter's bound implies that the ratio is of order $ O(n^{3( \alpha + \eta) })$. As the probability in the first term of \eqref{eq_upper_bound_amo} is decaying exponentially first, the expression converges to zero as $n \to \infty$ and the proof is complete.

\subsection{Proof of Proposition~\ref{lemma:cutting:tree}}

Fix $\theta > 0$. Define 
\aln{
{\sf R}_{n, \kappa}(\theta) = \{ \sum_{|\sfu| \le n - \kappa} \bdelta_{\gamma_n^\inv |X(\sfu)|} (\theta, \infty) \ge 1\}. \label{eq_defn_cutting_event}
}
Our first claim is that for every $\theta > 0$,
\aln{
\lim_{\kappa \to \infty} \limsup_{n \to \infty} r_n \prob^* \big[{\sf R}_{n, \kappa}(\theta) \big] = 0. \label{eq:reduced:aim:first:cutting}
}
Recall that $\varphi_i = \inf \{ |x| : g_i(x) > 0 \}$ for $i =1,2$ and $\varphi = \delta_1 \wedge \delta_2$. Then putting $\theta = \varphi/2$, we get $\exptn^* \big[ F_{g_1, g_2, \epsilon_1, \epsilon_2}(\wt{\bf N}_n)$ $\mbbo \big(\{{\sf R}_{n , \kappa}(\varphi/2)\}^c \big) \big]  = \exptn^* \big[ F_{g_1, g_2, \epsilon_1, \epsilon_2}\big(\wt{\bf N}_{n, \kappa} \big)  \big]$  Hence, the proof of the proposition is completed assuming \eqref{eq:reduced:aim:first:cutting}.


\noindent{\bf Proof of \eqref{eq:reduced:aim:first:cutting}.} As the probability of survival is positive, we can replace $\prob^*$ by $\prob$ without harming its rate of decay.
Note that $\sum_{|\sfu| \le n- \kappa} \bdelta_{X(\sfu)} =  \sum_{|\sfu| \le n- \kappa -1} \scrl_{\sfu}$ where ${\scrl}_{\sfu} = \sum_{i =1}^{Z(\sfu)} \bdelta_{X_i(\sfu)} $ is the independent copy of the point process $\scrl$ produced by $\sfu$. \label{nota_Zu} It is clear that 
\alns{
{\sf R}_{n, \kappa}(\theta) = \Big\{ \sum_{|\sfu |\le n-\kappa - 1}  \scrl_\sfu  \big(\{ \gamma_n x : |x | > \theta \} \big) \ge 1 \Big\} = \bigcup_{|\sfu| \le n- \kappa - 1} \bigcup_{i =1}^{Z(\sfu)} \{ |X_i(\sfu)| > \gamma_n \theta\}.
}
Recall that the displacements are identically distributed and independent of the branching mechanism. So union bound yields the following upper bound
\alns{
r_n \exptn(Z_1 + Z_2 + \ldots + Z_{n - \kappa}) \prob(|X_1| > \gamma_n \theta) \le \frac{\mu}{\mu - 1} \mu^{- \kappa} \frac{\prob(|X_1| > \gamma_n \theta)}{\prob(|X_1| > \gamma_n )}.
} 
 The ratio of the probabilities converges to $\theta^{- \alpha}$ because of \eqref{ass:marginal:identical:dist} when $n \to \infty$ and \eqref{eq:reduced:aim:first:cutting} follows immediately if we let $\kappa \to \infty$. This completes the proof.

\subsection{Proof of Lemma~\ref{lemma:one:tree:large:disp}}

Recall the definitions of ${\sf Q}_n^{(0)}$, ${\sf Q}_n^{(1)}$ and ${\sf Q}_n^{(\ge 2)}$ from \eqref{eq_defn_qn_one} and \eqref{eq_defn_qn_two}. It follows that $\prob ( {\sf Q}_n^{\ge 2}) = 1 - \prob({\sf Q}_n^{(0)}) - \prob({\sf Q}_n^{(1)}) $.  Define 
\alns{
\ol{\bf N}_{n, \kappa}^{(j)} = \sum_{\sfu \in {\sf T}_{\kappa}^{(j)} \setminus \{{\varpi}_j\}} \bdelta_{\gamma_n^\inv |X(\sfu)|} 
}
where ${\varpi}_j$ is the root of the subtree ${\sf T}_\kappa^{(j)}$  for every $j =1,2, \ldots, Z_{n - \kappa}$.  It follows from the assumptions that  $\wt{\bf N}_{n, \kappa}$ is the superposition of i.i.d. point processes $(\ol{\bf N}_{n, \kappa}^{(j)} : j \in [1: Z_{n - \kappa}])$ conditioned on $Z_{n - \kappa}$. We use these observations together combined with the inequality $1 - x^n \le n( 1- x)$ for all $x \in (0,1)$ and $n \ge 1$ to obtain the following upper bound for $\prob({\sf Q}_n^{\ge 2})$ 
\aln{
& \exptn \Big[ 1 - \Big( \prob \big( \ol{\bf N}_{n, \kappa}^{(1)}(\varphi/2, \infty) = 0 \big) \Big)^{Z_{n - \kappa}} - Z_{n - \kappa} \prob(\ol{\bf N}_{n, \kappa}^{(1)} (\varphi/2, \infty) \ge 1)  \Big( \prob \big( \ol{\bf N}_{n, \kappa}^{(1)}(\varphi/2, \infty) = 0 \big) \Big)^{Z_{n - \kappa} - 1}  \Big] \nonumber \\
& \le \exptn \Big( Z_{n - \kappa} \prob \big( \ol{\bf N}_{n, \kappa}^{(1)} (\varphi/2, \infty) \ge 1 \big) \Big[ 1- \Big( \prob \big( \ol{\bf N}_{n, \kappa}^{(1)}(\varphi/2, \infty)  = 0 \big) \Big)^{Z_{n - \kappa} - 1}  \Big] \Big). \label{eq_one_subtree_binomial_upper_bound}
}
To study the asymptotic order of the expectation, we shall truncate $Z_{n - \kappa}/ \mu^{n - \kappa}$. Fix a large positive integer ${\rm K}$. If we use the inequality $1 - x^n \le n(1 - x)$ for all $x \in (0,1)$ and $n \ge 1$,
then we get the following upper bound for $\prob({\sf Q}_n^{(\ge 2)})$ continued from \eqref{eq_one_subtree_binomial_upper_bound} 
\aln{
&  \exptn \Big( Z_{n - \kappa} \mbbo \big( Z_{n - \kappa} > {\rm K} \mu^{n - \kappa} \big) \prob \big( \ol{\bf N}_{n, \kappa}^{(1)}(\varphi/2, \infty) \ge 1 \big) \Big[ 1 - \Big( \prob \big( \ol{\bf N}_{n, \kappa }^{(1)}(\varphi/2, \infty) = 0 \big) \Big)^{Z_{n - \kappa} - 1} \Big] \Big) \nonumber \\
& \hspace{.5cm} + \exptn \Big( Z_{n - \kappa} \mbbo( Z_{n - \kappa} \le {\rm K} \mu^{n - \kappa}) \prob \big( \ol{\bf N}_{n, \kappa}^{(1)}(\varphi/2, \infty) \ge 1 \big) \Big[ 1 - \Big( \prob \big( \ol{\bf N}_{n, \kappa}^{(1)}(\varphi/2, \infty) = 0 \big) \Big)^{Z_{n - \kappa} -1} \Big] \Big) \nonumber \\ 
& \le  \mu^{2n - 2 \kappa} \exptn \big[ W_{n - \kappa}^2 \mbbo \big( W_{n - \kappa} \le {\rm K} \big) \big] \Big[ \prob \big( \ol{\bf N}_{n, \kappa}^{(1)}(\varphi/2, \infty) \ge 1 \big) \Big]^2  \nonumber \\
& \hspace{2cm} + \mu^{n - \kappa} \exptn \big[ W_{n - \kappa} \mbbo \big( W_{n - \kappa} > {\rm K} \big) \big] \prob \big( \ol{\bf N}_{n, \kappa}^{(1)}(\varphi/2, \infty) \ge 1 \big)   =: {\cal I}_{n}^{(1)} + {\cal I}_n^{(2)}
}
where $W_{n - \kappa} = Z_{n - \kappa}/ \mu^{n - \kappa}$. The proof of the lemma would be complete if we establish
\aln{
& \limsup_{n \to \infty} r_n {\cal I}_n^{(1)} = 0 \label{eq_one_subtree_first_term_negligible}\\
& \mbox{and }  \lim_{{\rm K} \to \infty} \limsup_{n \to \infty} r_n {\cal I}_n^{(2)} = 0. \label{eq_one_subtree_second_term_negligible}
}
 

\noindent{\bf Proof of \eqref{eq_one_subtree_first_term_negligible}.} We shall use the union bound combined with the fact that the displacements are independent of the branching mechanism  to see that $\prob \big( \ol{\bf N}_{n, \kappa}^{(1)} (\varphi/2, \infty) \ge 1 \big) \le \mu^\kappa (\mu - 1)^\inv \prob \big( |X_1| > \gamma_n \varphi/2  \big)$. This observation yields the following upper bound for $r_n {\cal I}_n^{(1)}$ 
\aln{
\Big[ \mu^n \prob(|X_1| > \gamma_n) \Big] ~~\big[ {\rm K}^2 (\mu -1)^{- 2} \big] ~~ \Big[ \frac{ \prob(|X_1| > \gamma_n \varphi/2)}{ \prob(|X_1| > \gamma_n)} \Big]^2. 
}
The second term in the upper bound is finite and the ratio in the third term converges to $(\varphi/2)^{- \alpha}$ due to assumption~\eqref{ass:marginal:identical:dist}. The first term converges to zero by the choice of $\gamma_n$ in \eqref{eq:defn:gamman}.  Therefore, the proof is complete.


\noindent{\bf Proof of \eqref{eq_one_subtree_second_term_negligible}.} The union bound for $\prob \big( \ol{\bf N}_{n, \kappa}^{(1)}(\varphi/2, \infty) \ge 1 \big)$ to get following upper bound for $r_n {\cal I}_n^{(2)}$
\aln{
(\mu - 1)^\inv ~~ \big( \sup_{k \ge 1} \exptn[W_k \mbbo(W_k > {\rm K})] \big) ~~ \Big( \frac{\prob \big( |X_1| > \gamma_n \varphi/2 \big)}{ \prob \big( |X_1| > \gamma_n   \big)} \Big).
}
The ratio of the probabilities converges to $(\varphi/2)^{- \alpha}$ as $n \to \infty$. The second term converges to zero if we let ${\rm K} \to \infty$ as $(W_k : k \ge 1)$ is a sequence of uniformly integrable random variables under Kesten-Stigum condition.

\subsection{Proof of Proposition~\ref{lemma:one:subtree}}

It is immediate that $\hlsfun(\wtbfN_{n, \kappa}) = 0$ almost surely on the event ${\sf Q}_n^{(0)}$. This observation yields $\exptn^* \big( \hlsfun(\wtbfN_{n, \kappa}) \big) = \exptn^* \big[ \hlsfun(\wtbfN_{n, \kappa}) \mbbo({\sf Q}_n^{(1)}) \big] + \exptn^* \big[ \hlsfun(\wtbfN_{n, \kappa}) \mbbo({ \sf Q}_n^{(\ge 2)}) \big]$.  The proposition follows from the following two claims
\aln{
& \lim_{n \to \infty} \frac{r_n \exptn^*\big[ \hlsfun(\wtbfN_{n, \kappa}) \mbbo( {\sf Q}_n^{(1)}) \big]}{  \exptn \big[ \hlsfun \big( ~ \wtbfN_{n, \kappa}^{(1)} ~ \big) \big] / \prob(|X_1|  > \gamma_n )} = ( 1- p_e)^\inv \mu^{ - \kappa} \label{eq_propn_one_subtree_first_aim} \\
& \mbox{and } \limsup_{n \to \infty} r_n \exptn^* \big[ \hlsfun(~\wtbfN_{n, \kappa}~) \mbbo({\sf Q}_n^{(\ge 2)}) \big] = 0. \label{eq_propn_one_subtree_second_aim}
}
Note that  \eqref{eq_propn_one_subtree_second_aim} follows from Lemma~\ref{lemma:one:tree:large:disp} realizing $\hlsfun(~\wtbfN_{n, \kappa}~) \le 1$ almost surely. The claim in \eqref{eq_propn_one_subtree_first_aim} follows from 
\aln{
& \limsup_{n \to \infty} r_n \Big|  \exptn^* \Big( \hlsfun \big[ \wtbfN_{n, \kappa} \big]  \mbbo({\sf Q}_n^{(1)})  \Big)   - \exptn \Big( \hlsfun(\wtbfN_{n, \kappa}) \mbbo \big[ Z_{n - \kappa} > 0 \big] \frac{\mbbo \big[ {\sf Q}_n^{(1)} \big]}{1 - p_e} \Big) \Big| = 0 \label{eq:onesubtreesuffices:aim:step1}\\
    & \mbox{ and } \lim_{n \to \infty} \frac{ r_n  \exptn \Big[ \mbbo(Z_{n - \kappa} > 0) \hlsfun \big( \wtbfN_{n,\kappa} \big) \mbbo( {\sf Q}_n^{(1)})  \Big]}{ \mu^{- \kappa} \exptn \Big( \hlsfun \big(\wtbfN^{(1)}_{n, \kappa} \big) \Big) / \prob( |X_1 | > \gamma_n)} = 1. \label{eq:onesubtreesuffices:aim:step2}
}

\subsubsection{Proof of \eqref{eq:onesubtreesuffices:aim:step1}}

Define ${\cal S}_{n - \kappa}$ to be the event that at least one of the subtrees in the forest $({\sf T}_\kappa^{(j)} : 1 \le j \le Z_{n - \kappa})$ is infinite (never extincts). Realizing the fact that $\prob^*$ is the conditional version of $\prob$, we can connect them via Radon-Nikodym derivative which is as follows 
\alns{
 \dtv \prob^* = \frac{1}{\prob({\cal S})} \mbbo(Z_{n - \kappa} > 0) \mbbo({\cal S}_{n - \kappa}).   
}
This observation combined with the facts that $\prob({\cal S}) = ( 1 - p_e)$ and $\hlsfun(\wtbfN_{n, \kappa}^{(1)}) \le 1 $ almost surely, yields the following upper bound for the absolute difference of expectations in \eqref{eq:onesubtreesuffices:aim:step1} 
\aln{
( 1- p_e)^\inv r_n \exptn \Big[ \mbbo( Z_{n - \kappa}  > 0) \mbbo({\cal S}_{n - \kappa}^c) \mbbo({\sf Q}_n^{(1)}) \Big]. \label{eq_onesubtree_suffices_aim1_upp1}
}
Let ${\sf U}_j = \{ {\sf T}_\kappa^{(j)} \mbox{ becomes extinct } \}$. Then ${\cal S}_{n - \kappa}^c = \cap_{ j = 1}^{Z_{ n - \kappa}} {\sf U}_j $.  Note that ${\sf Q}_n^{(1)}$ and ${\cal S}_{n - \kappa}^c$  depends on $ Z_{n - \kappa}$. So, we shall first condition on $Z_{n - \kappa}$ and then compute an average with respect to the law of $Z_{n - \kappa}$. The union bound for the event ${\sf Q}_n^{(1)} = \cup_{j =1}^{Z_{n - \kappa}} \{ \ol{\bf N}_{n, \kappa}^{(j)} (\varphi/2, \infty) \ge 1 \}$ yields the following upper bound for the expression in \eqref{eq_onesubtree_suffices_aim1_upp1}
\aln{
& r_n \exptn \Big[ \mbbo(Z_{n - \kappa} > 0) \sum_{j =1}^{Z_{n - \kappa}} \exptn \Big( \mbbo(~ \ol{\bf N}_{n, \kappa}^{(j)} (\varphi/2, \infty)~ \ge 1)  \prod_{j'=1}^{Z_{n - \kappa}} \mbbo({\sf U}_{\rtext{j'}})  \Big| ~~Z_{n - \kappa} \Big) \Big] \nonumber \\
&  \le r_n\exptn \Big( \mbbo( Z_{n - \kappa} > 0)~~  Z_{n - \kappa} ~~p_e^{Z_{n - \kappa} } \Big) \prob \big( \  \ol{\bf N}_{n, \kappa}^{(1)} (\varphi/2, \infty) \ge 1 \big) \label{eq_onesubtree_free_star_upp}
} 
ignoring the constant $(1  - p_e)^\inv$ for obvious reasons. To obtain the last inequality, we have used the facts that $\ol{\bf N}_{n, \kappa}^{(j)}$ and $({\sf U}_{j'} : j' \in [1: Z_{n - \kappa}] \setminus \{j \})$ are independent and also identically distributed over $j \in [1 : Z_{n - \kappa}]$. The union bound yields $\prob(~ \ol{\bf N}_{n, \kappa}^{(1)} (\varphi/2, \infty) \ge 1~) \le (\mu - 1)^\inv \mu^{\kappa + 1} \prob(|X_1| > \gamma_n \varphi/2) $. This observation  leads to the following upper bound of \eqref{eq_onesubtree_free_star_upp}
\aln{
\Big[(\mu -1)^\inv \mu \Big] ~~\Big[ \frac{\prob(|X_1| > \gamma_n \rtext{\varphi/2})}{ \prob(|X_1| > \gamma_n)} \Big] \exptn\Big[ \mbbo(Z_{n - \kappa} > 0) W_{n - \kappa} p_e^{\rtext{Z_{n - \kappa}} } \Big]
}
where $W_k = \mu^{-k} Z_k$ for all $ k \ge 1$. The second term converges to $(\varphi/2)^{- \alpha}$. The term inside the expectation is bounded almost surely by $\sup_{k \ge 1} W_k$ which has finite expectation due to Kesten-Stigum condition \eqref{eq:assumption:branching:process}. So we are allowed to apply dominated convergence theorem if the almost sure limit is determined. It follows that $\mbbo(Z_{n - \kappa} > 0 ) \asconv \mbbo({\cal S})$ and $Z_{n - \kappa} \asconv \infty$ on the 
event ${\cal S}$ which imply that $p_e^{Z_{n - \kappa}} \asconv 0$ on ${\cal S}$. Therefore, the proof follows.  

\subsubsection{Proof of \eqref{eq:onesubtreesuffices:aim:step2}}

Note that the events $({\sf H}_n^{(j)} \cap \{ \cap_{t =1, t \neq j} ({\sf H}_n^{(t)})^c\} : j \in [1~:~ Z_{n - \kappa}])$ are mutually disjoint events. Recall that $\wtbfN_{n, \kappa}(g_i) = \sum_{j =1}^{Z_{n - \kappa}} \wtbfN_{n, \kappa}^{(j)}(g_i)$ almost surely and $(\varphi, \infty) \subset {\rm support}(g_1) \cap {\rm support}(g_2)$. Then it follows that $\hlsfun(\wtbfN_{n, \kappa}) = \hlsfun(\wtbfN_{n, \kappa}^{(j)})$ on the event ${\sf H}_n^{(j)} \{ \cap_{t = 1, t \neq j}^{Z_{n - \kappa}} ({\sf H}_n^{(t)})^c \}$. These facts together yield following expression for $\exptn \Big[ \hlsfun(\wtbfN_{n, \kappa}) \mbbo(Z_{n - \kappa} > 0)  \mbbo( {\sf Q}_n^{(1)})\Big]$
\aln{
& \exptn \Big[ \mbbo(Z_{n - \kappa} > 0) \sum_{j =1}^{Z_{n - \kappa}} \exptn \Big( \hlsfun \big[ \wtbfN_{n, \kappa}^{(j)} \big] \mbbo({\sf H}_n^{(j)})~\prod_{t =1, t \neq j}^{Z_{n - \kappa}} \mbbo \big[ ({\sf H}_n^{(t)})^c \big] ~\Big| Z_{ n- \kappa}  \Big)  \Big] \nonumber \\
& = \exptn \Big[ \mbbo(Z_{n - \kappa} > 0) ~Z_{n - \kappa}~ \exptn \Big( \hlsfun(\wtbfN_{n, \kappa}^{(1)}) \prod_{t =2}^{Z_{n - \kappa}} \mbbo( ({\sf H}_n^{(t)})^c) \Big| ~Z_{n - \kappa} \Big) \Big] \label{eq_one_subtree_suffices_aim_2_equality1}
}
as $\hlsfun(\wtbfN_{n, \kappa}^{(j)}) \mbbo({\sf H}_n^{(j)}) =  \hlsfun(\wtbfN_{n, \kappa}^{(j)})$ almost surely and $\big( \hlsfun(\wtbfN_{n, \kappa}^{(j)}) \prod_{t = 1, t \neq j}^{Z_{n - \kappa}} \mbbo( [{\sf H}_n^{(t)}]^c) : j \in [1~:~ Z_{n - \kappa}~] \big)$ are i.i.d. conditioned on $Z_{n - \kappa}$. Our next observation is that $\wtbfN_{n, \kappa}^{(1)}$ and $\prod_{j =2}^{Z_{n - \kappa}} \mbbo([{\sf H}_n^{(t)}]^c)$ are independent conditioned on $Z_{n - \kappa}$. These observations suggest the following expression for the right hand side of \eqref{eq_one_subtree_suffices_aim_2_equality1}
\aln{
\exptn \Big[ \hlsfun \big[ \wtbfN_{n, \kappa}^{(1)} \big] \Big] \exptn \Big[ \mbbo \big( Z_{n - \kappa} > 0 \big) Z_{n - \kappa} ~~\big[ \prob \big( [{\sf H}_n^{(1)}]^c \big) \big]^{Z_{n - \kappa} - 1} \Big] \label{eq_one_subtree_sffices_aim2_equality2}
}
as $( \mbbo( {\sf H}_n^{(t)}) : t \in [1~:~|{\sf D}_{n - \kappa}|])$ are i.i.d. conditioned on $|{\sf D}_{n - \kappa}|$ and ${\sf H}_n^{(t)}$ is independent of ${\sf D}_{n - \kappa}$ for every fixed $t$. In the light of the expression \eqref{eq_one_subtree_sffices_aim2_equality2} obtained for the expectation in the numerator of \eqref{eq:onesubtreesuffices:aim:step2}, the claim in \eqref{eq:onesubtreesuffices:aim:step2} reduces to 
\aln{
\lim_{n \to \infty} \exptn \Big[ \mbbo(Z_{n - \kappa} > 0) W_{n - \kappa} \big(1 - \prob[ {\sf H}_n^{(1)}] \big)^{Z_{n - \kappa} - 1} \Big] = 1.  \label{eq_one_sutree_suffices_step_2_final aim}
}

\begin{lemma} \label{lemma_onesubtreesuffices_second_aim}
If the assumptions of Proposition~\ref{lemma:one:subtree} hold, then $[1 - \prob({\sf H}_n^{(1)})]^{Z_{n - \kappa}} \asconv 1$.
\end{lemma}

 In view of the above lemma, we can see that the random variables inside expectation in \eqref{eq_one_sutree_suffices_step_2_final aim} converges almost surely to $\mbbo({\cal S}) W$ and $\exptn(\mbbo({\cal S}) W) = \exptn(W) = 1$. The dominated convergence theorem allows the exchange of limit and expectation  as $\mbbo(Z_{n - \kappa} > 0) W_{n - \kappa} \big(1 - \prob[ {\sf H}_n^{(1)}] \big)^{Z_{n - \kappa} - 1} \le \sup_{k \ge 1} W_k$ which has finite expectation (thanks to Kesten-Stigum condition \eqref{eq:assumption:branching:process}).  Thus, the proof of \eqref{eq:onesubtreesuffices:aim:step2}  is complete. 
 
\medskip 

\noindent{\bf Proof of Lemma~\ref{lemma_onesubtreesuffices_second_aim}.} To prove the lemma, it is enough to show that 
\aln{
\limsup_{n \to \infty} \Big[ 1 - \Big(1 - \prob({\sf H}_n^{(1)}) \Big)^{Z_{n - \kappa}} \Big] = 0 \label{eq_lemma_aux_prop_onesubtree_aim}
}
almost surely. If we use the inequality $ 1 - x^n \le n( 1- x)$ for all $n \ge 1$ and $x \in (0,1)$, then it follows that the left hand side  of \eqref{eq_lemma_aux_prop_onesubtree_aim} can be almost surely bounded from above by $Z_{n - \kappa} \prob( \ol{\bf N}_{n, \kappa}(\varphi/2, \infty) \ge 1)$. The union bound can be used to bound the probability $\prob( \ol{\bf N}_{n, \kappa}(\varphi/2, \infty) \ge 1)$ and it leads to the following almost sure upper bound for the left hand side of \eqref{eq_lemma_aux_prop_onesubtree_aim}  
\aln{
\Big[ \mu^n \prob(|X_1| > \gamma_n) \Big]~~ W_{n - \kappa}~~\Big[ \frac{ \prob(|X_1|  > \gamma_n \varphi/2)}{ \prob( |X_1| > \gamma_n )} \Big] (\mu/(\mu - 1)).
}
In the above-mentioned upper bound, the first term converges to zero due to the choice of $\gamma_n$ given in  \eqref{eq:defn:gamman}. The other terms are almost surely bounded and so, the proof is complete.

\subsection{Proof of Proposition~\ref{propn_pruned_subtree}}

Using the inequality in \eqref{eq_comp_ineq_hlsfun_two_measures}, the proposition reduces to the following claim
\aln{
\lim_{\varrho \to \infty} \limsup_{n \to \infty}~~~ [\prob(|X_1| > \gamma_n)]^\inv \exptn \Big| \wtbfN_{n, \kappa}^{(1)}(g) - \wtbfN_{n, \kappa, \varrho}^{(1)}(g) \Big| = 0 \mbox{ for all } g \in C_c^+(\mbbr_0). \label{eq_pruning_reduced_aim}
}
To prove this claim, we shall use assistance of another intermediate point process and we shall construct this here using the following rewarding scheme.

\subsubsection{Contruction of the marked subtree.}

We start with the root of the subtree $\sfT_{\kappa}^{(1)}$ and reward it with zero $\diamond$. Let us consider a particle in the $i$-th generation which is rewarded with $j$ $\diamond$s. Its first $\varrho$ children (according to the lexicographic order of their Ulam-Harris labels) are awarded with the $(j+1)$ $\diamond$s. If the particle has $l (> \varrho)$ children, then we assign to each of the left $(l - \varrho)$ child/children with $j$ $\diamond$s as reward. This scheme will be followed until each particle in the $\kappa$-th generation is rewarded. The resultant subtree will be referred to as the {\it marked subtree} and will be denoted by $\sfT_\kappa^{(\diamond, 1)}$. Note that the vertices in $\sfT_\kappa^{(1)}$ is same as that of $\sfT_{\kappa}^{(\diamond,1)}$ (ignoring their rewards) and the set of all vertices in the $i$-th generation of $\sfT_\kappa^{(\diamond, 1)}$ are denoted by ${\sf D}_i^{(1)}$. For every $\sfu \in {\sf D}_i^{(1)}$ rewarded with $l$ $\diamond$s, we define $A^{(\varrho, \diamond)}(\sfu)$ to be the number of descendants in the $\kappa$-th generation of $\sfu$ which are rewarded with $(\kappa - i + l)$ $\diamond$s for $l \in [0~:~i]$. The following is the most important observation for us:
\aln{
 & \mbox{ If $\sfu \in {\sf D}_i^{(1)}$ is rewarded with $i$ $\diamond$s, then $\sfu \in \mathbb{T}_\kappa^{(1)}$ (pruned version of $\sfT_\kappa^{(1)}$)}  \nn \\
 & \mbox{ and it has $A^{(\varrho, \diamond)}(\sfu)$ descendants in the $\kappa$-th generation of $\mathbb{ T}_\kappa^{(1)}$.}
 }
 Define 
\aln{
{\bf N}_{n, \kappa, \varrho}^{(*, 1)} = \sum_{\sfu \in \sfT_\kappa^{(*,1)} \setminus \{{\sf r}_1\}}A^{(\varrho, \diamond)}(\sfu) \bdelta_{\gamma_n^\inv X(\sfu)}. 
}    
Then the claim in \eqref{eq_pruning_reduced_aim} follows if, for every $g \in \ccr$,  we can show
\aln{
& \lim_{ \varrho \to \infty} \limsup_{n \to \infty} ~~[\prob(|X_1| > \gamma_n)]^\inv \exptn  \Big| \wtbfN_{n, \kappa}^{(1)}(g) - {\bf N}_{n, \kappa, \varrho}^{(*,1)}(g) \Big| = 0  \label{eq_comp_marked_preprun_aim}\\
& \mbox{ and } \lim_{\varrho \to \infty} \limsup_{n \to \infty} ~~[\prob(|X_1| > \gamma_n)]^\inv \exptn \Big| {\bf N}_{n, \kappa, \varrho}^{(*, 1)}(g)  -  \wtbfN_{n, \kappa, \varrho}^{(1)}(g)\Big| = 0. \label{eq_comp_marked_pruned_aim}
}

\subsubsection{Proof of \eqref{eq_comp_marked_preprun_aim}}

Let $\varphi_0 > 0$ be such that ${\rm support}(g) \subset \{ x : |x| >  \varphi_0\}$ and define $\norm{g} = \sup_{x} |g(x)|$. Let $(Z_i^{(\varrho)} : i \ge 0)$ denote the sizes of the generations of the branching process with progeny distribution $Z_1^{(\varrho)} \eqd  Z_1 \wedge \varrho$.  Note that $(A^{(\varrho, \diamond)} (\sfu) : \sfu \in {\sf D}_{i}^{(1)})$ are independent copies of $ Z^{(\varrho)}_{\kappa - i}$ for every $i \ge 1$. As $A^{(\varrho, \diamond)}(\sfu) \le A(\sfu)$ almost surely for every $\sfu \in \sfT_\kappa^{(1)}$, we have following upper bond for the expectation of the absolute difference in \eqref{eq_comp_marked_preprun_aim} 
\aln{
& \norm{g} \sum_{i =1}^\kappa \exptn \big[ \sum_{\sfu \in {\sf D}_i^{(1)}} [A(\sfu) - A^{(\varrho, \diamond)}(\sfu)] \mbbo(|X(\sfu)| > \gamma_n \varphi_0)\big] \nonumber \\
& = \norm{g} \prob \big( |X_1| > \gamma_n \varphi_0 \big) \sum_{i =1}^\kappa \exptn \Big( |{\sf D}_i^{(1)}| \Big) \exptn \big( Z_{\kappa - i} - Z^{(\varrho)}_{\kappa - i} \big).  \label{eq_upp_abs_diff_marked_prepruned}
} 
For the last equality,  we have used the independence of the branching mechanism and the identically distributed displacements. Additionally, we used the subtrees of $\sfT_\kappa^{(1)}$ which are rooted at ${\sf D}_i^{(1)}$ are i.i.d. and independent of ${\sf D}_i^{(1)}$. If we use this observation in \eqref{eq_upp_abs_diff_marked_prepruned}, we obtain following upper bound for the left hand side of \eqref{eq_comp_marked_preprun_aim}
\aln{
\norm{g}\frac{\prob(|X_1| > \gamma_n \varphi_0)}{\prob(|X_1| > \gamma_n )} \sum_{i =1}^\kappa \mu^i \big( \exptn[Z_{\kappa - i}] - \exptn[Z_{\kappa - i}^{(\varrho)}] \big). 
}
The ratio of the probabilities converge to $\varphi_0^{- \alpha}$ due to Assumptions~1.2 on the displacements as $n \to \infty$ and the other term do not depend on $n$. As the sum is finite, it vanishes when we let $\varrho \to \infty$.

\subsubsection{Proof of \eqref{eq_comp_marked_pruned_aim}}
We can use the same facts used in the proof of \eqref{eq_comp_marked_preprun_aim} with appropriate modifications, to derive the following upper bound for the left hand hand side of \eqref{eq_comp_marked_pruned_aim}
\aln{
\norm{g} \frac{\prob(|X_1| > \gamma_n \varphi_0)}{\prob(|X_1| > \gamma_n  )} \sum_{i =1}^\kappa \exptn(Z_{\kappa - i}^{(\varrho)}) \big( \exptn(Z_i) - \exptn(Z_i^{(\varrho)}) \big).
}
We have already seen that the ratio of the probabilities converge to $\varphi_0^{- \alpha}$. The sum again vanishes if we let $\varrho \to \infty$ due to having finitely many summands. We conclude the proof here.

\subsection{Proof of Proposition~\ref{lemma:computing:limit}}


In this proof, we shall use the \rtext{couple} $(l, t)$ to  denote the $t$-th vertex $\sfu$ at the $l$-th generation of the regularized subtree $\scrt_\kappa^{(1)}$. We did not reserve any duple for the root ${\varpi}_1$ of $\scrt_\kappa^{(1)}$ as its displacement is assumed to be zero.   Define the random vectors
\aln{
\widetilde{\bf A} = (A^{(\varrho)}{(l,t)} : 1 \le t \le \varrho^l, 1 \le l \le \kappa) \mbox{ and } \widetilde{\bf X} = (X'{(l,t)} : 1 \le t \le \varrho^l, 1 \le l \le \kappa).
} 
Let $[0~:~i] = \{0,1,2, \ldots, i\}$ for every $i \ge 1$. Then the state spaces of $\wt{\bf A}$ and $\wt{\bf X}$ will denoted respectively by 
$${\sf B}_\varrho = [0: ~\varrho^{\kappa - 1}]^\varrho \times [0: ~\varrho^{\kappa - 2}]^{\varrho^2} \times \ldots \times [0:~\varrho]^{\varrho^{\kappa - 1}} \times [0: ~1]^{\varrho^\kappa}$$
  and $ {\sf R}_\varrho =  \mbbr^{\varrho + \varrho^2 + \varrho^3 + \ldots + \varrho^\kappa}$
 Let $\wt{\bf x} = (x_{l,t} : 1 \le t \le \varrho^l; ~1 \le l \le \kappa)$.  These notations yield following expression for the  left hand side of \eqref{eq:propn:comp:lim} 
\aln{
\mu^{- \kappa} \sum_{\wt{\bf a} \in {\sf B}_\varrho} \prob(\wt{\bf A} = \wt{\bf a}) \int_{{\sf R}_\varrho} \prod_{i =1}^2 \Big[ 1 - \exp \Big\{ - \Big( \sum_{l =1}^\kappa \sum_{t =1}^{\varrho^l} a_{l,t} g_i \big( x_{l,t} \big) - \epsilon_i \Big)_+ \Big\} \Big] \frac{\prob \Big( \gamma_n^\inv \wt{\bf X} \in \dtv \wt{\bf x} \Big)}{\prob(|X_1| > \gamma_n)} \label{eq:aim:limpropn:new:notation}
}
and we shall compute the limit as $n \to \infty$, $\varrho \to \infty$ and $\kappa \to \infty$ respectively in the rest of the proof.

\noindent{\bf Limit of \eqref{eq:aim:limpropn:new:notation} as $n \to \infty$. } Note that only the law of $\wt{\bf X}$ involves $n$. We first divide the members of $\wt{\bf X}$ into groups such that the members of a group have the same parent and members of two different groups have different parent.   To be precise, the groups are $(X'(l,t) : t \in {\sf J}_l)$ where ${\sf J}_l = \{\varrho k + 1 : k \in [0~:~ \varrho^{l -1} - 1]\} $ for every $l \in [1~:~ \kappa]$.  It is clear that the groups are i.i.d. and regularly varying.  To be more formal, let ${\rm PROJ}_\varrho : \mbbr^\mbbn \to \mbbr^\varrho$ such that ${\rm PROJ}_\varrho  ((x_i : i \ge 1)) = (x_i : 1 \le i \le \varrho)$.  \label{not_proj} It follows from the assumption~\ref{ass:joint:regvar:disp} and Theorem~4.1 in \cite{lindskog:resnick:roy:2014} that 
$$ \prob \Big[ \big((X'(l, t), X'(l, t + 1), \ldots X'(l, t + \varrho) \big) \in \cdot \Big] \in {\rm RV}_\alpha( \mbbr^\varrho \setminus \{ {\bf 0}_\varrho\},  \lambda_0^{(\varrho)})$$
 where $\lambda_0^{(\varrho)} = \lambda_0 \circ {\rm PROJ}_\varrho^\inv$ for every $t \in {\sf J}_l$ and $1 \le l \le \kappa$. This decomposition combined with a derivation similar to the Subsection~4.5.1 in \cite{lindskog:resnick:roy:2014} yields  $\prob(\wt{\bf X} \in \cdot) \in {\rm RV}_\alpha({\sf R}_\varrho \setminus \{ {\bf 0}_{\varrho + \varrho^2 + \ldots+ \varrho^{\kappa}}\}, \tau( \cdot ))$ where
\aln{
& \tau (\cdot)  : = \sum_{l =1}^\kappa \sum_{t \in {\sf J}_l} \tau_{l, t} (\cdot) \mbox{ such that } \tau_{l, t} (\cdot) = \bigotimes_{j =1}^{\varrho + \varrho^2 + \varrho^{l -1} + t - 1} \bdelta_0 \otimes \lambda_0^{(\varrho)} \bigotimes_{j'= \varrho + \varrho^2 + \ldots + t + \varrho}^{\varrho +\varrho^2 + \ldots + \varrho^\kappa} \bdelta_0 (\cdot).
}   
Recall the set ${\cal O} = \{{\bf x} \in \mbbr^\mbbn : |x_1| > 1 \}$ defined in \eqref{defn_dashed_lamda}. These facts combined with Theorem~3.1 in \cite{lindskog:resnick:roy:2014} (see proof of (ii) implies (iii) in the Definition~3.2) yield 
\aln{
& \frac{\prob( \gamma_n^\inv \wt{\bf X} \in \cdot)}{ \prob(|X_1| > \gamma_n)} \hlconv \tau'(\cdot)= \sum_{l =1}^\kappa \sum_{t \in {\sf J}_l} \tau'_{l, t} (\cdot) \mbox{ where }\\
&  \tau'_{l, t} (\cdot) = \bigotimes_{j =1}^{\varrho + \varrho^2 + \varrho^{l -1} + t - 1} \bdelta_0 \otimes \lambda^{(\varrho)} \bigotimes_{j'= \varrho + \varrho^2 + \ldots + t + \varrho}^{\varrho +\varrho^2 + \ldots + \varrho^\kappa} \bdelta_0 (\cdot) \mbox{ and } \lambda^{(\varrho)} (\cdot) = \lambda_0^{(\varrho)} (\cdot) / \lambda_0({\cal O }).
}
So we accomplished the first task by obtaining the limit of the measure inside the integral in \eqref{eq:aim:limpropn:new:notation}.  

Note that the limit can be pushed inside sum in  the expression \eqref{eq:aim:limpropn:new:notation} as the sum is finite and the law of $\wt{\bf A}$ does not depend on $n$. To push the limit inside the integral, we have to show that the integrand is a member of the class ${\cal C}_b({\sf R}_\varrho \setminus \{ {\bf 0}_{\varrho + \varrho^2 + \ldots + \varrho^{\kappa- 1}}\})$. Once limit goes inside inside the integral ($\tau'$ replaces the ratio of probabilities in \eqref{eq:aim:limpropn:new:notation}), we need to do some algebra in order to discount the vertices $(l,t)$ such that $A^{(\varrho)}(l,t) = 0$.  The details 
 are similar to the derivation from (4.19) till (4.24) in  \cite{bhattacharya:hazra:roy:2017} and so skipped here. To write down the limit explicitly, we need following notations.

\begin{itemize}

\item $U^{(\varrho)}$ is an independent copy of the random variable $Z_1^{(\varrho)}$. $\wt{U}^{(\varrho)}$ denotes the random variable $U^{(\varrho)}$ conditioned to be positive that is, $\prob(\wt{U}^{(\varrho)} = i) = \prob(U^{(\varrho)} = i | U^{(\varrho)} > 0)$ for every $i \ge 1$. 


\item $(Z_l^{(s, \varrho)}: l \ge 1, s \ge 1)$ is an collection of independent random variables such that $Z_l^{(s, \varrho)} \eqd Z_l^{(\varrho)}$ where $Z_l^{(\varrho)}$ is the size of the $l$-th generation of a GW process with progeny random variable $Z_1^{(\varrho)}$. The random variable $\wt{Z}_l^{(s, \varrho)}$ will stand for the random variable $Z_l^{(s, \varrho)}$ conditioned to be positive. We assume that the collection $(Z_l^{(s,\varrho)} : l \ge 1, s \ge 1)$ is independent of the random variable $U^{(\varrho)}$. 


\end{itemize}

The limit of the expression in \eqref{eq:aim:limpropn:new:notation} equals 
\aln{
 &\mu^{- \kappa} \int_{\scrm_0} \hlsfun(\nu) m_*^{(\kappa, \varrho)}(\dtv \nu) \label{eq_lim_n_final}\\
& \mbox{where }~ m_*^{(\kappa, \varrho)}(\cdot)  = \sum_{l =0}^{\kappa - 1} \mu_\varrho^{\kappa - l - 1} \prob(U^{(\varrho)} \ge 1) \exptn \Big[ \sum_{{\sf A} \in \pow\big( [1~:~\wt{U}^{(\varrho)}] \big) \setminus \{\emptyset \}  } \lambda^{(\varrho)} \big( {\bf x} \in \mbbr^\varrho : \sum_{s \in {\sf A}} \wt{Z}_l^{(s, \varrho)} \bdelta_{x_s} \in \cdot \big) \nn \\
& \hspace{2.5cm} \big[ \prob(Z_l^{(\varrho)} \ge 1) \big]^{|{\sf A}|} \big[ \prob (Z_l^{(\varrho)} = 0) \big]^{\wt{U}^{(\varrho)} - |{\sf A}|}   \Big].
}
 Recall ${\sf B}_r(\xi_0) = \{ \nu \in \scrm_0 :  \rho_v(\nu, \varnothing) > r \}$ where $ \varnothing $ is the null measure on $\mbbr_0$.  To claim 
\aln{
\prob(\wtbfN_{n, \kappa, \varrho}^{(1)} \in \cdot) / \prob(|X_1| > \gamma_n) \hlconv m_*^{(\kappa, \varrho)} (\cdot) \label{eq_lim_n_hlconv_final}
}
 in the space of measures on $\scrm_0$, we must check whether $m_*^{(\kappa, \varrho)}$ is a non-null measure on $\scrm_0$ and satisfies $ m_*^{(\kappa, \varrho)} ({\sf B}_r) < \infty$ for every $r > 0$ in addition to the convergence. 


\noindent{\bf Rest of the proof}. Observe that $\mu_\varrho \to \mu$, $U^{(\varrho)} \asconv U$, $\wt{U}^{(\varrho)} \asconv \wt{U}$ and $( \wt{Z}_l^{(s, \varrho)} : s \ge 1, l \ge 1) \asconv (\wt{Z}_l^{(s)} : s \ge 1, l \ge 1)$ which are introduced before the main result Theorem~\ref{thm:main_result}.  Using Theorem~4.1 (combined with the Remark on page~296) in \cite{lindskog:resnick:roy:2014} and dominated convergence theorem (combined with the above-stated almost sure convergences), we see that 
\aln{
& \mu^{-\kappa} \int \hlsfun(\nu) m_*^{(\kappa, \varrho)}(\dtv \nu) \rightarrow \int \hlsfun(\nu) m_*^{(\kappa)}(\dtv \nu) \\
& \mbox{where }  m_*^{(\kappa)} (\cdot) = \sum_{l = 0}^{\kappa - 1} \mu^{- l - 1} \prob(U \ge 1) \exptn \Big[ \sum_{{\sf A} \in {\rm Pow}([1~:~ \wt{U}])} \big[ \prob(Z_1 \ge 1) \big]^{|{\sf A}|} \big[ \prob(Z_1 = 0) \big]^{\wt{U} - |{\sf A}|}   \nn\\
& \hspace{2cm} \lambda(\{{\bf x} \in \mbbr^\mbbn : \sum_{s \in {\sf A}} \wt{Z}_l^{(s)} \bdelta_{x_s} \in \cdot \})  \Big]. \label{eq_defn_mstarkappa}
}
To complete the proof of the fact that $\mu^{-\kappa} m_*^{(\kappa, \varrho)} \hlconv m_*^{(\kappa)}$, we need to verify that $m_*^{(\kappa)}$ is a non-null measure on $\scrm_0$ and $ m_*^{(\kappa)}({\sf B}_r^c(\varnothing)) < \infty$. These technicalities are ignored as they are addressed in \cite{bhattacharya:hazra:roy:2017}. Note that $(1 - e^{- x}) \le 1$  for $x \ge 0$ and $\sum_{l \ge 1} \mu^{- l} < \infty$ as $\mu >1$.  This observation allows us to apply the dominated convergence theorem and conclude $\int \hlsfun(\nu) m_*^{(\kappa)}(\dtv \nu) \rightarrow ( 1- p_e)\int \hlsfun(\nu) m_*(\dtv \nu)$ as $\kappa \to \infty$ where $m_*$ is defined in \eqref{eq:derived_lim_measure_K}.  Ignoring again the additional technical details, we conclude $m_*^{(\kappa)} \hlconv m_*$ and the proof of the proposition follows.

\section{Proofs of the other results in Section~1 and Section~2}

\subsection{Proof Theorem~\ref{thm:ldp:rightmost:joint:regvar}}

Recall that  $\{M_n > \gamma_n x\} = \{{\bf N}_n \in {\sf E}_x \}$ where ${\sf E}_x = \{\xi \in \scrm_0: \xi (x, \infty) \ge 1 \}$ for all $x > 0$ and the form of $m^*$ given in \eqref{eq:derived_lim_measure_K}.   Note that each member of$(\wt{Z}_l : l \ge 0)$ is supported on the set of all positive integers.  Ignoring the abstract technicalities,  some algebra leads to the following identity
\aln{
& \lim_{n \to \infty} r_n \prob^*(M_n > \gamma_n x) \nonumber \\
& = (1 - p_e)^\inv \prob(U > 0) \sum_{l=0}^\infty \mu^{-(l+1)} \exptn \Big[ \sum_{{\sf G} \in \pow([1~:~\wt{U}])\setminus \{ \emptyset \} }  \Big( \prob(Z_l > 0) \Big)^{|{\sf G}|} \Big( \prob(Z_l = 0) \Big)^{\wt{U} - |{\sf G}|} \nonumber \\
& \h \h \lambda \Big(\{{\bf y} \in \mbbr^\mbbn : \sum_{s \in {\sf G}} \bdelta_{y_s} (x, \infty) \ge 1\} \Big) \Big].  \label{eq:jtregvar:maxima:one}
}
We now use homogeneity property of the limit measure $\lambda_0$ to see $\lambda \big( \big\{ {\bf y} \in \mbbr^\mbbn : \sum_{s \in {\sf G}} \bdelta_{y_s} (x, \infty) \ge 1 \big\} \big) = x^{- \alpha} \lambda \big(  \big\{ {\bf y} \in \mbbr^\mbbn: \sum_{s \in {\sf G}} \bdelta_{y_s}(1, \infty) \ge 1  \big\} \big)$. It is \rtext{straightforward} to check that $\{ {\bf y} \in \mbbr^\mbbn: \sum_{s \in {\sf G}} \bdelta_{y_s}(1, \infty) \ge 1\} = \bigcup_{s \in {\sf G}} {\sf V}_s$ where ${\sf V}_s$ is introduced in Theorem~\ref{thm:ldp:rightmost:joint:regvar}.  Hence the expression in \eqref{eq:maxima:jtregvar} follows.

It follows from Corollary~5.1 in \cite{hult:samorodnitsky:2010} that $\varnothing \notin {\rm cl}({\sf E}_x)$. To conclude the proof, it is then enough to show that $m^*(\partial {\sf E}_x) = 0$ which we shall establish now. Note that ${\rm cl}({\sf E}_x) \subset  \ol{\sf E}_x :=  \{ \xi \in \scrm_0 : \xi [x, \infty) \ge 1 \}$. So it is clear that $m^*(\partial {\sf E}_x) \le  m^*(\ol{\sf E}_x \setminus {\sf E}_x) = m^*(\ol{\sf E}_x) - m^*({\sf E}_x)$. Define $\wt{\sf V}_s = \pi_s^\inv[1, \infty)$. We can now use the explicit form of the $m^*$ in \eqref{eq:derived_lim_measure_K} to obtain following upper bound for the difference
\aln{
& {\rm C}_4 \sum_{l =0}^\infty  \mu^{-(l+1)} \exptn \Big[ \sum_{{\sf G} \in \pow([1~:~\wt{U}]) \setminus \{ \emptyset \} } \lambda\big( \big\{\cup_{s \in {\sf G}} \wt{\sf V}_s \big\} \setminus \big\{ \cup_{s \in {\sf G}} {\sf V}_s \big\} \big)   \big( \prob(Z_l = 0) \big)^{\wt{U}} \Big( \frac{ \prob(Z_l > 0) }{ \prob(Z_l = 0) } \Big)^{|{\sf G}|} \Big] \label{eq:limit:bounadry:Bx}
}
We now would like to derive an upper bound for $\lambda\big( \big\{\cup_{s \in {\sf G}} \wt{\sf V}_s \big\} \setminus \big\{ \cup_{s \in {\sf G}} {\sf V}_s \big\} \big)$. Let ${\sf G}$ be a singleton set, then $\{\pi_s^\inv [1, \infty) \} \setminus \{ \pi_s^\inv (1, \infty)\} = \pi_s^\inv \{1\}$. Suppose now that ${\sf G} = \{s_1, s_2 , \ldots, s_k\}$ for $k > 1$. For every $k \ge 1$, $\pi_{s_1, s_2, \ldots, s_k} \mbbr^\mbbn \to \mbbr^k$ will be used to denote a projection map such that $\pi_{s_1, s_2, \ldots, s_k}((x_i : i \ge 1)) = (x_{s_1}, x_{s_2}, \ldots, x_{s_k})$. \label{nota_projection_map_general} Then, $\Big\{ \bigcup_{i =1}^k \pi_{s_i}^\inv[1, \infty) \Big\} \setminus \Big\{ \bigcup_{i =1}^k \pi_{s_i}^\inv (1, \infty) \Big\}$ equals
\alns{
 \bigcup_{i =1}^k \Big\{ \pi^\inv_{s_1, s_2 , \ldots, s_k} \Big( \otimes_{j=1}^{i -1} (- \infty, 1] \times \{ 1 \} \otimes_{j'=i +1}^k (-\infty, 1] \Big) \Big\} \subset \bigcup_{i =1}^k \pi_{s_i}^\inv (\{1 \}).
}
This observation yields following upper bound for the expression derived in \eqref{eq:limit:bounadry:Bx}
\aln{
{\rm C}_4 \sum_{l = 0}^\infty \mu^{- (l+1)} \exptn \Big[ \sum_{{\sf G} \in \pow([1~:~\wt{U}]) \setminus \{\emptyset\}} \lambda \big( \bigcup_{s \in G} {\pi_s^\inv \{1\}}\big) \big[ \prob(Z_l > 0) \big]^{|{\sf G}|} \big[ \prob(Z_l = 0) \big]^{\wt{U } - |{\sf G}|} \Big]. \label{eq:limit:measure:maxima:Bx}
}
Then, we can use union bound to get that $\lambda(\bigcup_{s \in |G|} \pi_s^\inv\{1\}) \le \sum_{s \in {\sf G}} \lambda \circ \pi_s^\inv( \{ 1\}) = |{\sf G}| \nu_\alpha(\{1\}) = 0$ as the displacements are identically distributed and $\nu_\alpha$ is absolutely continuous with respect to the Lebesgue measure. This completes the proof.

\subsection{Proof of Theorem~\ref{thm:extreme:positions:linear:dependent:disp}.}

We divide the proof into two parts. In the first part, we establish that the random vector $\prob({\bf X} \in \cdot) \in {\rm RV}_\alpha((0, \infty)^2, \lambda_{\phi})$. In the next part, we shall derive the expression for $m_\phi^*$ from $\lambda_\phi$ using Theorem~\ref{thm:main_result}. 

\subsubsection{Derivation of $\lambda_\phi$ and proof of \eqref{eq:expression:limit:measure:linearly:dependent:disp}}
Consider the map ${\mathscr T}_\phi : (0, \infty)^3 \to (0, \infty)^2$ such that ${\mathscr T}_\phi(x_1, x_2, x_3) = (\phi x_1 + x_2, \phi x_2 + x_3 )$. It follows immediately that ${\mathscr T}_\phi$ is continuous and ${\mathscr T}_\phi({\bf 0}_3) = {\bf 0}_2$. \rtext{Note} that $\prob({\bf Y} \in \cdot) \in {\rm RV}_\alpha((0, \infty)^3, \lambda^{({\bf Y})})$ where 
\aln{
\lambda^{({\bf Y})} = \nu_\alpha^+ \otimes \bdelta_0 \otimes \bdelta_0 + \bdelta_0 \otimes \nu_\alpha^+ \otimes \bdelta_0 + \bdelta_0 \otimes \bdelta_0 \otimes \nu_\alpha^+.
}
Therefore, for every ${\sf A} \in {\cal B}((0, \infty)^2)$ such that $ \dist({\bf 0}_2, {\sf A}) > 0$, we can use Theorem~2.2 in \cite{lindskog:resnick:roy:2014} to derive 
\alns{
& \lambda_\phi \big( {\sf A} \big) = \lambda^{({\bf Y})} \big( \big\{ {\bf z} \in (0, \infty)^3 :{\mathscr T}_\phi({\bf z}) \in {\sf A}  \big\} \big) \nonumber \\
&=\phi^\alpha \nu_\alpha^+(\{z > 0 : z \cdot (1,0) \in {\sf A}\}) + \nu_\alpha^+(\{z > 0 : z\cdot(1, \phi) \in {\sf A}\}) + \nu_\alpha^+(\{ z > 0 : z\cdot (0,1) \in {\sf A}\}) .
}
The homogeneity property of the measure $\nu_\alpha^+$ has been used to obtain the last equality. The proof of \eqref{eq:expression:limit:measure:linearly:dependent:disp} follows from Theorem~\ref{thm:main_result} replacing $\lambda$ by $\lambda_\phi$.

\subsubsection{Proof of \eqref{eq_two_largest_order_stat_lin_dep} and \eqref{eq_max_lin_dep}}
Let ${\sf E}_{x_1, x_2} = \{\xi \in \scrm_0 : \xi(x_1, \infty) \ge 1 \mbox{ and } \xi(x_2, \infty) \ge 1\}$ for $0 < x_2 < x_1 < \infty$. Then it follows that 
\alns{
\big\{ {\bf N}_n \in {\sf E}_{x_1, x_2} \big\} = \big\{ M_n^{(1)} > {\gamma}^{(\phi)}_n x_1 \big\} \cap \big\{ M_n^{(2)} > {\gamma}^{(\phi)}_n x_2 \big\} 
}
If we ignore the technicalities by assuming 
\aln{
 \varnothing \notin {\rm cl}({\sf E}_{x_1, x_2})  \mbox{ and } m_\phi^*(\partial {\sf E}_{x_1, x_2}) = 0, \label{eq_technical_cond_lin_dep}
}
then Theorem~\ref{thm:main_result} gives us 
\aln{
& \lim_{n \to \infty} r_n^{(\phi)} \prob \big( M_n^{(1)} >\gamma_n^{(\phi)} x_1;~~ M_n^{(2)} > \gamma^{(\phi)}_2 x_2 \big) \nonumber \\
& = ( 1 + \phi^{\alpha}) \sum_{l = 0}^\infty 2^{- l} \nu_\alpha^+ \big( \big\{ z > 0 : 2^l \bdelta_z(x_1, \infty) \ge 1 \mbox{ and } 2^l \bdelta_z(x_2, \infty) \ge 2 \big\} \big) \nonumber \\
&  + \sum_{l =0}^\infty 2^{- l}\nu_\alpha^+ \big( \big\{z > 0 : 2^l (\bdelta_z + \bdelta_{\phi z})(x_1,\infty)\ge 1 \mbox{ and } 2^l(\bdelta_z + \bdelta_{\phi  z} ) (x_2, \infty) \ge 2 \big\} \big). \label{eq_first_expression_lim_measure_lin_dep}
}
We shall treat each of the terms in \eqref{eq_first_expression_lim_measure_lin_dep} separately. If we look at the first term, then the term $l = 0$ vanishes as $\bdelta_z (x_2,\infty) < 2$.  If $l \ge 1$, then a simple argument combined with the fact $\phi > 1$ shows that $\nu_\alpha^+( \{z > 0 : 2^l \bdelta_z(x_1, \infty) \ge 1 \mbox{ and } 2^l \bdelta_z(x_2, \infty ) \ge 2\}) = \nu_\alpha^+ ((x_1,\infty))= x_1^{- \alpha}$.  We shall now turn to the second sum in \eqref{eq_first_expression_lim_measure_lin_dep}. If $l \ge 1$, then we can see that $\nu_\alpha^+ (\{ z > 0 : (\bdelta_z + \bdelta_{\phi z}) (x_2, \infty) \ge 2 \mbox{ and } (\bdelta_z + \bdelta_{\phi z}) (x_1, \infty) \ge 1\})  = \nu_\alpha^+ \big( (\max(x_2, \phi^\inv x_1), \infty) \big) = [\max(x_2, \phi^\inv x_1)]^{- \alpha}$.  A simple algebra combined with the fact $\phi >1$ leads us to the following expression for the $l$-th term in the second sum in \eqref{eq_first_expression_lim_measure_lin_dep}
\aln{
& \nu_\alpha^+ \big( (\phi^\inv x_1, \infty) \big) = \phi^\alpha x_1^{- \alpha}.
}
The final expression in \eqref{eq_two_largest_order_stat_lin_dep} follows from these observations if \eqref{eq_technical_cond_lin_dep} holds.

We now provide arguments for conditions \eqref{eq_technical_cond_lin_dep} to hold. Note that ${\sf E}_{x_1, x_2} \subset {\sf E}_{x_1} = \{ \xi \in \scrm_0 : \xi(x_1, \infty) \ge 1\}$ and therefore, ${\rm cl}({\sf E}_{x_1, x_2}) \subset {\rm cl}({\sf E}_{x_1})$. We have already seen that $\varnothing \notin {\rm cl}({\sf E}_{x_1})$ and hence the first claim in \eqref{eq_technical_cond_lin_dep} follows. We now observe that ${\rm cl}({\sf E}_{x_1, x_2}) \subset \ol{\sf E}_{x_1, x_2}: = \{\xi \in \scrm_0 : \xi([x_1, \infty)) \ge 1 \mbox{ and } \xi([x_2, \infty)) \ge 2\}$ which yields the following upper bound
\alns{
m_\phi^* ( \partial {\sf E}_{x_1, x_2}) \le m_\phi^*( \ol{\sf E}_{x_1, x_2}) - m_\phi^* ({\sf E}_{x_1, x_2}).
}
We can now use the absolute continuity of the measure $\nu_\alpha^+$ with respect to the Lebesgue measure to show that the upper bound equals $0$ and the second claim in \eqref{eq_technical_cond_lin_dep} follows. Hence, the proof of \eqref{eq_two_largest_order_stat_lin_dep} is complete.

Derivation of \eqref{eq_max_lin_dep} from \eqref{eq:expression:limit:measure:linearly:dependent:disp} is very similar to the proof of Theorem~\ref{thm:ldp:rightmost:joint:regvar} and so the details are skipped.

\subsubsection{\bf Proof of Theorem~\ref{thm:extremal:process:ldp:iid:disp}} \label{proof:pp:consq:pp:iid}

Recall from the Remark~\ref{thm:ldp:pp:iid:disp:leaf} that the measure $\lambda$ equals  
\aln{
\lambda_{iid}(\cdot ) = \sum_{i =1}^\infty \bdelta_0 \otimes \ldots \otimes \bdelta_0 \otimes \underbrace{\nu_\alpha}_{i\mbox{-th position}} \otimes \bdelta_0 \otimes \ldots \otimes \bdelta_0
}
when the displacements are asymptotically tail-independent or i.i.d. So if we substitute $\lambda $ by $\lambda_{iid}$ in \eqref{eq:derived_lim_measure_K}, then we have the following expression 
\aln{
& \frac{\prob(U > 0)}{1 - p_e} \sum_{l =0}^\infty \mu^{- (l + 1)} \exptn \Big[ \sum_{{\sf G} \in \pow([1~:~\wt{U}])} \sum_{i =1}^\infty \bigotimes_{j =1}^{i -1} \bdelta_0 \otimes \nu_\alpha \bigotimes_{j'= i +1}^\infty \bdelta_0 (\{ {\bf x} \in \mbbr^\mbbn: \sum_{s \in {\sf G}} \wt{Z}_l^{(s)} \bdelta_{x_s} \in {\sf A}\})  \nonumber \\
& \h \h \h [\prob(Z_l \ge 1)]^{|{\sf G}|} [\prob(Z_l = 0)]^{\wt{U} - |{\sf G}|} \Big] \nonumber \\
& = (1 - p_e)^\inv \prob(U \ge 1) \sum_{l =0}^\infty \mu^{-(l + 1)} \exptn \Big[ \nu_\alpha \big( \{x \in \mbbr: \wt{Z}_l \bdelta_x \in {\sf A}\} \Big) \Big]  \nn \\
& \h \h \h \exptn \Big[ \sum_{{\sf G} \in \pow([1~:~\wt{U}] \setminus \{\emptyset\})} |{\sf G}| [\prob(Z_l \ge 1)]^{|{\sf G}|} [\prob(Z_l = 0)]^{\wt{U} - |{\sf G}|}  \Big]. \label{eq_cond_exptn_lim_measure_iid}
}
To derive the last equality, we have used the fact that $(\wt{Z}_l^{(i)} : i \ge 1)$ are independent copies of $\wt{Z}_l$ and also independent of $\wt{U}$ for every $l \ge 1$. We can see that the last expectation in \eqref{eq_cond_exptn_lim_measure_iid} equals 
\aln{
\exptn \Big[ \sum_{k =1}^{\wt{U}} k \binom{\wt{U}}{k} (\prob[Z_l \ge 1])^k (\prob[Z_l = 0])^{\wt{U} - k} \Big] = \exptn(\wt{U}) \prob(Z_l \ge 1) = \mu \prob(Z_l \ge 1)/ \prob(U \ge 1).
}
This observation provides the final expression of $\wt{m}^*_{iid}$ in \eqref{eq_ldp_pp_lim_iid_with_leaf}. Additionally, if the conditions of Theorem~\ref{thm:extremal:process:ldp:iid:disp} hold (the genealogical tree does not have any leaf), we use $\prob(Z_l \ge 1) = 1$ in \eqref{eq_ldp_pp_lim_iid_with_leaf} to get $m^*_{iid}$. Hence, Theorem~\ref{thm:extremal:process:ldp:iid:disp} follows. 


\subsection{Proof of Theorem~\ref{thm:ldp:maxima:iid}}

Recall that ${\sf E}_x = \{\xi \in \scrm(\mbbr_0) : \xi(x, \infty) \ge 1\}$ and $\varnothing \notin {\rm cl}({\sf E})$ from the proof of Theorem~\ref{thm:ldp:rightmost:joint:regvar}. If the displacements are asymptotically tail-independent or i.i.d., then we can use \eqref{eq_ldp_pp_lim_iid_with_leaf} to conclude
\aln{
\lim_{n \to \infty} r_n \prob^*(M_n^{(1)} > \gamma_n x) & = \lim_{n \to \infty} r_n \prob^* \big( {\bf N}_n \in {\sf E}_x \big)  = \wt{m}^*_{iid}({\sf E}_x). \label{eq_iid_max_basic}
} 
Some algebra and homogeneity property of $\nu_\alpha$ added with \eqref{eq_iid_max_basic}, yield the expression for the maximum position in \eqref{eq_ldp_max_iid_with_leaf}. The derivation would be complete if we show that $\wt{m}^*_{iid}(\partial{\sf E}_x) = 0$. The proof of the last claim is very similar to the proof of $m^*(\partial{\sf E}_x) = 0$ (see proof of Theorem~\ref{thm:ldp:rightmost:joint:regvar}) and so omitted here.

We can use $\wt{\sf E}_x = \{\xi \in \scrm_0 : \xi(- \infty, - x) \ge 1\}$ instead of ${\sf E}_x$ to derive the large deviations of the minimum position $\wt{M}_n^{(1)}$.  The expression for the minimum position  in \eqref{eq_ldp_max_iid_with_leaf} follows.

If we further assume that the genealogical tree does not have any leaf, then Theorem~\ref{thm:ldp:maxima:iid} follows by substituting $\prob(Z_l \ge 1) = 1$ and $p_e = 0$ in the expression \eqref{eq_ldp_pp_lim_iid_with_leaf}.

\section{List of Notations} \label{sec:notation}
\makeatletter{}
\label{pg:notation} To ease the reading, we list the important notions and notations used in this paper, and the corresponding page numbers. 

{\footnotesize
\begin{center} \renewcommand{\arraystretch}{1.2}
\begin{longtable}{p{2.4cm}p{10.1cm}p{1.5cm}}
  \textbf{Notation}&\textbf{Description}&\textbf{Page} \\
 $\bdelta_x$ & Dirac's delta measure which puts unit mass at $x$  & \pageref{nota_dirac_delta}\\
 ${\sfT }$ & the genealogical tree & \pageref{nota_tree_vertices_edges}\\ 
 ${\sf u}$ & a typical vertex of ${\sf T}$ & \pageref{nota_tree_vertices_edges} \\
 $|{\sf u}|$ & generation of the vertex $|\sfu|$ & \pageref{nota_tree_vertices_edges}\\
 $X(\sfu)$ & displacement attached to the vertex $\sfu$ &  \pageref{nota_tree_vertices_edges}\\
 $S(\sfv)$ & position of the vertex $\sfv$ on the real line & \pageref{nota_tree_vertices_edges} \\
$Z_i = |{\sf D}_i|$ & size of the $i$-th generation of ${\sf T}$ & \pageref{nota_tree_vertices_edges} \\
$M_n^{(k)}$ & the $k$-th largest position at the $n$-th generation & \pageref{nota_tree_vertices_edges}\\
$\mu$ & expected size of the first generation  & \pageref{eq:assumption:branching:process}\\
${\sf D}_i$ & collection of all vertices at the $i$-th generation of ${\sf T}$  & \pageref{nota_Dn} \\ 
$\mbbr_0$ & $\mbbr \setminus \{0\}$  & \pageref{nota_zero_removed_real_line} \\
$\mathscr{M}(\mbbr_0)$ & space of all point measures on $\mbbr_0$ &  \pageref{nota_zero_removed_real_line}\\
$\varnothing$ & null measure on $\mbbr_0$ & \pageref{nota_zero_removed_real_line} \\
$\scrm_0$ & $\scrm(\mbbr_0) \setminus \{ \varnothing\}$ &  \pageref{nota_zero_removed_real_line}\\
${\sf E}_x$ & $ \{ \xi \in \scrm(\mbbr_0) : \xi(x, \infty) \ge 1 \}$ & \pageref{nota_E_x} \\
${\rm int}(\sf A)$ & interior of the set ${\sf A}$ & \pageref{nota_E_x} \\
${\rm cl}({\sf A})$ & closure of the set ${\sf A}$ & \pageref{nota_E_x}\\
$\partial {\sf A}$ & boundary of the set ${\sf A}$ &  \pageref{nota_boundary_set}\\
$\mathbb{M}(\scrm_0)$ & space of all measures on $\scrm_0$ & \pageref{nota_measure_on_Mzero}\\
$\mathbb{M}_0$ & a topology on the space of all measures on the punctured Polish space   & \pageref{not_Mzero_topology} \\ 
$F_{g_1, g_2, \epsilon_1, \epsilon_2}$ & convergence determining class of functionals for the $\mathbb{M}_0$ convergence on the space $\scrm_0$ & \pageref{eq_defn_hlsfun} \\
$\mathbb{R}^\mbbn_{\bf 0}$ & the space of all real sequences except the origin ${\bf 0}_\infty$ & \pageref{not_punctured_space_real_sequence} \\
$\widetilde{Z}_l$ & the random variable $Z_l$ conditioned to stay positive & \pageref{nota_Zl_condition_to_stay_positive}\\
 $\widetilde{U}$ & the random variable $U$ conditioned to stay positive & \pageref{nota_u_condition_to_stay_positive}\\
$\pi_{j_1, j_2, \ldots, j_k}$ & $\pi_{j_1, j_2, \ldots, j_k} : \mbbr^\infty \to \mbbr^k$ such that $\pi_{j_1, j_2, \ldots, j_k} ({\bf x}) = (x_{j_1}, x_{j_2}, \ldots, x_{j_k})$ & \pageref{nota_projection_map_general}\\ 
$|{\sf G}|$ & cardinality of the set ${\sf G}$  & \pageref{nota_cardinality_set}\\
$\pow({\sf G})$ & power set of ${\sf G}$ & \pageref{nota_cardinality_set} \\
$\emptyset$ & null set & \pageref{nota_cardinality_set}\\
$[i~:~j ]$ & $\{i, i + 1, i + 2, \ldots, j\}$ & \pageref{nota_closed_interval_integers} \\
$p_e$ & probability of extinction of a Galton-Watson tree with progeny distribution $Z_1$ & \pageref{nota_pe} \\
${\cal S}$ & survival event for the random tree ${\sf T}$ & \pageref{nota_pe}\\${\varpi}$ & root of the genealogical tree ${\sf T}$ & \pageref{nota_Iv} \\
 ${\sf I}(\sfv)$ & collection of vertices on the genealogical path to the vertex $\sfv$ from the root of ${\sf T}$ & \pageref{nota_Iv}  \\
 $\sfu \rightarrow \sfv$ & geodesic path from the vertex $\sfu$ to $\sfv$ & \pageref{nota_geodesic_path} \\
$A(\sfu)$ & the number of descendants of the vertex $u$ in the $n$-th generation of ${\sf T}$ & \pageref{nota_Au}\\
$Z(\sfu)$ & the number of children of the vertex $\sfu$ &  \pageref{nota_Zu}\\
$\prob^*$ & conditional probability given the survival of the genealogical tree & \pageref{nota_pe} \\
$\exptn^*$ & conditional expectation induced by $\prob^*$ & \pageref{nota_pe} \\
$\mathbb{T}_\kappa^{(1)}$ & pruned subtree with $\kappa$-generations & \pageref{not_pruned_subtree}\\
$\mathscr{T}_\kappa^{(1)}$ & regularized version of the subtree $\mathbb{T}_\kappa^{(1)}$ & \pageref{not_regular_tree}\\
${\rm PROJ}_\varrho$ & projection map ${\rm PROJ}_\varrho = \pi_{1, 2, \ldots, \varrho} : \mbbr^\mbbn \to \mbbr^\varrho$ for every $\varrho \ge 1$ & \pageref{not_proj}\\
 \end{longtable}
\end{center}
}

\section*{Acknowledgment}
The research is supported by the NWO VICI grant 639.033.413 and partially supported by Polish National Science Centre under the grant 2018/29/B/ST1/00756 and SEED Grant (RD/0520-IRCCSH0-009) provided by IRCC, IIT Bombay.  The author is thankful to Rajat Subhra Hazra, Zbigniew Palmowski, Parthanil Roy, Gennady Samorodnitsky and Bert Zwart for many helpful discussions and also would like to acknowledge the warm hospitality of ISI, Bangalore for the period  15 - 26th March, 2017 where the project started. A part of the research was done when the author was a visiting scientist in ISI, Kolkata for the period May 22 - August 31, 2017.  \rtext{The author is thankful to the anonymous referees for their comments and suggestions which improved the readability of the paper and accuracy of the results.}

\bibliographystyle{abbrvnat}

\vspace{1.5cm}

\parbox[t]{12cm}{\footnotesize {Ayan Bhattacharya \\ Department of Mathematics,\\ Indian Institute of Technology Bombay,\\ Mumbai, Maharashtra, India.\\eamil - ayanbh@math.iitb.ac.in}}

\end{document}